\newtheorem{theorem}{Theorem}[section]
\newtheorem{prop}[theorem]{Proposition}
\newtheorem{lemma}[theorem]{Lemma}
\newtheorem{cor}[theorem]{Corollary}
\theoremstyle{definition}
\newtheorem{definition}[theorem]{Definition}
\newtheorem{example}[theorem]{Example}
\numberwithin{equation}{section}
\newcommand\cc{\mathbb{C}}
\newcommand\ff{\mathbb{F}}
\newcommand\nn{\mathbb{N}}
\newcommand\qq{\mathbb{Q}}
\newcommand\rr{\mathbb{R}}
\newcommand\zz{\mathbb{Z}}
\newcommand\pval{\mathsf{v}_p}
\begin{document}

	\mbox{}
	\title{On the Atomic Structure of Puiseux Monoids}
	\author{Felix Gotti}
	\address{Mathematics Department\\UC Berkeley\\Berkeley, CA 94720}
	\email{felixgotti@berkeley.edu}
	\date{\today}
	
	\begin{abstract}
		In this paper, we study the atomic structure of the family of Puiseux monoids, i.e, the additive submonoids of $\qq_{\ge 0}$. Puiseux monoids are a natural generalization of numerical semigroups, which have been actively studied since mid-nineteenth century. Unlike numerical semigroups, the family of Puiseux monoids contains non-finitely generated representatives. Even more interesting is that there are many Puiseux monoids which are not even atomic. We delve into these situations, describing, in particular, a vast collection of commutative cancellative monoids containing no atoms. On the other hand, we find several characterization criteria which force Puiseux monoids to be atomic. Finally, we classify the atomic subfamily of strongly bounded Puiseux monoids over a finite set of primes.
	\end{abstract}
	
	\maketitle
	
	\section{Introduction} \label{sec:intro}
	
	A \emph{Puiseux monoid} is an additive submonoid of the non-negative rational numbers. The family of Puiseux monoids is a natural generalization of that one comprising all numerical semigroups. In this paper, we explore the atomic structure of the former family, which is far more complex than the atomic structure of numerical semigroups. However, the controlled atomic behavior of numerical semigroups will guide our initial approach to Puiseux monoids.
	
	Numerical semigroups are atomic monoids that have been systematically studied since the mid-nineteen century; see the monograph \cite{GR09} of Garc\'ia-S\'anchez and Rosales. In algebraic geometry, Noetherian local domains whose integral closures are finitely generated modules and discrete valuation rings pop up very often, and their associated valuations turn out to be numerical semigroups. Many properties of the previously mentioned domains can be fully characterized in terms of their valuation numerical semigroups. For more details, see \cite{BDF97}.
	
	Understanding the atomicity of Puiseux monoids can set the groundwork for a future exploration of the arithmetic properties and factorization invariants of their atomic subfamilies. Once we obtain good insight of the algebraic properties of Puiseux monoids, we might expect to use this family of commutative monoids to understand certain behaviors of Puiseux domains (see, e.g., \cite[Sec.~13.3]{dE95}) and other subdomains of power series with rational exponents. This would mirror the way numerical semigroups have been used to understand many attractive properties of subdomains of power series with natural exponents (see \cite{BDF97}).
	
	Although significant effort has been put in exploring the arithmetic invariants of many families of atomic monoids (see, for instance, \cite{CCMMP14,CGP14,CS10,wS05}), very little work has gone into an attempt to classify them. In this paper, we find an entirely new family of atomic monoids hidden inside the realm of Puiseux monoids, increasing the current spectrum of atomic monoids up to isomorphism and, therefore, contributing to a classification of the aforementioned family. In addition, Puiseux monoids provide a source of examples of both atomic and non-atomic monoids. This new arsenal of examples might help to test several existence conjectures concerning commutative semigroups and factorization theory.
	
	The family of Puiseux monoids contains a vast collection of non-atomic representatives, monoids containing non-unit elements with no factorizations into irreducibles. Even more surprising, Theorem \ref{theo:sufficiency for anatomicity} identifies a subfamily of antimatter representatives, Puiseux monoids possessing no irreducible elements. In contrast, there are various subfamilies of Puiseux monoids whose members are atomic even when they are not isomorphic to numerical semigroups. We devote this paper to introduce and study the fascinating atomic structure of Puiseux monoids.
	
	In Section \ref{sec:preliminary}, we establish the terminology we will be using throughout this paper. In Section \ref{sec:Atomic Characterization of Puiseux Monoids}, after pointing out how Puiseux monoids naturally appear in commutative ring theory, we introduce some members of the targeted family, illustrating how much Puiseux monoids differ from numerical semigroups in terms of atomic configuration. Once we have highlighted the wildness of the atomic structure of the family being investigated, we show that its atomic members are precisely those containing a minimal set of generators (Theorem \ref{theo:finitely generated rational monoids}). We then present two sufficient conditions for atomicity (Proposition \ref{prop:bounded below valuation implies atomicity} and Theorem \ref{theo:sufficient condition for atomicity}). In Section \ref{sec:Strongly Bounded PM}, we introduce the subfamily of \emph{strongly bounded} Puiseux monoids, presenting simultaneously atomic and antimatter subfamilies failing to be strongly bounded. In the last section, we study the atomic configuration of strongly bounded Puiseux monoids. We present a sufficient condition for strongly bounded Puiseux monoids to be antimatter (Theorem \ref{theo:sufficiency for anatomicity}). To conclude, we dedicate the second part of the last section to the classification of the atomic subfamily of strongly bounded Puiseux monoids \emph{over} a finite set of primes (defined in Section \ref{sec:Atomic Characterization of Puiseux Monoids}). \\
	
	\section{Preliminary}\label{sec:preliminary}
	
	We begin by presenting some of the terminology related to the atomicity of commutative cancellative monoids. Then we briefly mention a few basic properties of numerical semigroups, the objects we generalize in this work. Our goal in this section is not to formally introduce the elementary concepts and results of commutative semigroups and factorization theory, but rather to fix notation and establish the nomenclature we will use later. For extensive background information on commutative semigroups and non-unique factorization theory, we refer readers to the monographs \cite{pG01} of Grillet and \cite{GH06} of Geroldinger and Halter-Koch, respectively.
	
	We use the double-struck symbols $\mathbb{N}$ and $\mathbb{N}_0$ to denote the sets of positive integers and non-negative integers, respectively. Moreover, if $r$ is a real number, we will denote the set $\{z \in \zz \mid z \ge r\}$ simply by $\zz_{\ge r}$; with a similar intention, we will use the notations $\zz_{> r}$, $\qq_{\ge r}$, and $\qq_{> 0}$. If $S \subseteq \qq$, we often write $S^\bullet$ instead of $S \setminus \{0\}$. For $r \in \qq_{> 0}$, we denote the unique $a,b \in \nn$ such that $r = a/b$ and $\gcd(a,b)=1$ by $\mathsf{n}(r)$ and $\mathsf{d}(r)$, respectively. If $R \subseteq \qq_{>0}$, we call the sets $\mathsf{n}(R) = \{\mathsf{n}(r) \mid r \in R\}$ and $\mathsf{d}(R) = \{\mathsf{d}(r) \mid r \in R\}$ the \emph{numerator} and \emph{denominator} \emph{set} of $R$, respectively.
	
	Unless otherwise specified,  the word \emph{monoid} in this paper means commutative cancellative monoid. Let $M$ be a monoid. Because every monoid is assumed to be commutative, unless we state otherwise, we will always use additive notation; in particular, $``+"$ denotes the operation of $M$, while $0$ denotes the identity element. The invertible elements of a monoid are called \emph{units}, and the set of all units of $M$ is denoted by $M^\times$. The monoid $M$ is said to be \emph{reduced} if $M^\times = \{0\}$. If $M$ is generated by a subset $S$, we write $M = \langle S \rangle$. The monoid $M$ is \emph{finitely generated} if $M = \langle S \rangle$ for some finite set $S$. For a brief but precise exposition of finitely generated commutative monoids, readers might find \cite{GR99} very useful. An element $a \in M \! \setminus \! M^\times$ is \emph{irreducible} or an \emph{atom} if $a = x + y$ implies either $x$ or $y$ is a unit. The set of atoms of $M$ is denoted by $\mathcal{A}(M)$. The monoid $M$ is \emph{atomic} if every non-unit element of $M$ can be expressed as a sum of atoms, i.e., $M = \langle \mathcal{A}(M) \rangle$.
	
	We briefly comment on general properties of numerical semigroups. A \emph{numerical semigroup} $N$ is a submonoid of the additive monoid $\nn_0$ such that $\nn_0 \setminus \! N$ is finite. Every numerical semigroup has a unique minimal set of generators, which happens to be finite. For $n \in \nn$, if $N = \langle a_1, \dots, a_n \rangle$ is minimally generated by $a_1, \dots, a_n \in \nn$, then $\gcd(a_1, \dots, a_n) = 1$ and $\mathcal{A}(N) = \{a_1, \dots, a_n\}$. Consequently, every numerical semigroup is atomic and has finitely many atoms. The family of numerical semigroups has been intensely studied for more than three decades. For an entry point to the realm of numerical semigroups, readers might consider \cite{GR09} to be a valuable resource.
	
	Let $N = \langle a_1, \dots, a_n \rangle$ be a minimally generated numerical semigroup. The \emph{Frobenius number} of $N$, denoted by $F(N)$, is the greatest natural number not contained in $N$, i.e., the smallest integer $F(N)$ such that for all $b \in \nn$ with $b > F(N)$ the Diophantine equation $a_1x_1 + \dots + a_nx_n = b$ has a solution in $\nn_0^n$. We will need later the following result (taken from \cite{aB42}), which gives an upper bound for the Frobenius number $F(N)$ in terms of the minimal set of generators $a_1, \dots, a_n$.
	
	\begin{theorem} \label{theo:Frobenius upper bound}
		Let $N = \langle a_1, \dots, a_n \rangle$ be a minimally generated numerical semigroup, where $a_1 < \dots < a_n$. Then
		\[
			F(N) <  (a_1 - 1)(a_n - 1).
		\]
	\end{theorem}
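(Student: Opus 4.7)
The plan is to control the Ap\'ery set $\mathrm{Ap}(N, a_1) = \{w_0, w_1, \dots, w_{a_1-1}\}$, where $w_r$ denotes the smallest element of $N$ congruent to $r$ modulo $a_1$. Since $F(N) = \max_r w_r - a_1$, the desired inequality $F(N) < (a_1 - 1)(a_n - 1)$ follows immediately from the bound $w_r \le (a_1 - 1) a_n$ for every $r \in \{0, 1, \dots, a_1 - 1\}$.

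To establish this bound I would work in $\zz/a_1 \zz$. Since $\gcd(a_1, a_2, \dots, a_n) = 1$, the subsemigroup $T \subseteq \zz/a_1 \zz$ generated by the residues $\bar a_2, \dots, \bar a_n$ coincides with the subgroup they generate, which is all of $\zz/a_1 \zz$. The key combinatorial step is then to show that every residue $r$ admits a representation $r \equiv \sum_{j=2}^n c_j a_j \pmod{a_1}$ with non-negative integers $c_j$ satisfying $\sum_j c_j \le a_1 - 1$. Letting $S_k \subseteq \zz/a_1 \zz$ denote the set of residues expressible as a sum of at most $k$ generators from $\{\bar a_2, \dots, \bar a_n\}$, I would note that $|S_0| = 1$ and that $|S_{k+1}| > |S_k|$ whenever $S_k \ne \zz/a_1 \zz$: if instead $S_{k+1} = S_k$, then $S_k$ would be a subsemigroup containing $0$ and closed under addition by each generator, forcing $S_k = T = \zz/a_1 \zz$. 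Strict growth then forces $S_{a_1 - 1} = \zz/a_1 \zz$, proving the claim.

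Lifting such a short representation back to $N$ produces an element $\sum_j c_j a_j \equiv r \pmod{a_1}$ of size at most $(a_1 - 1) a_n$, since each $a_j \le a_n$ and $\sum_j c_j \le a_1 - 1$. Hence $w_r \le (a_1 - 1) a_n$ for every $r$, and therefore $F(N) \le (a_1 - 1) a_n - a_1 = (a_1 - 1)(a_n - 1) - 1 < (a_1 - 1)(a_n - 1)$. The main obstacle is the combinatorial claim on short representations in $\zz/a_1\zz$; once that is secured, the bound on the Ap\'ery set and hence on $F(N)$ is immediate. A pleasant feature of this approach is that it avoids splitting into cases according to whether $\gcd(a_1, a_n) = 1$: even when $a_n$ alone does not generate $\zz/a_1 \zz$, the filtration $S_0 \subsetneq S_1 \subsetneq \cdots$ must saturate by step $a_1 - 1$.
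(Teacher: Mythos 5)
Your proof is correct. Note, however, that the paper does not prove this theorem at all—it states it as a known result cited to Brauer (1942)—so there is no in-paper argument to compare against. Your Ap\'ery-set approach is a clean and standard route to Brauer's bound: the filtration $S_0 \subsetneq S_1 \subsetneq \cdots$ in $\zz/a_1\zz$ must strictly grow as long as it has not saturated (since $S_{k+1}=S_k$ would make $S_k$ a subsemigroup closed under each $\bar a_j$ and containing $0$, hence all of $\zz/a_1\zz$), so $S_{a_1-1}=\zz/a_1\zz$, giving a representative of each residue of size at most $(a_1-1)a_n$. The final arithmetic $(a_1-1)a_n - a_1 = (a_1-1)(a_n-1)-1$ checks out, yielding the strict inequality.
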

	
	Numerical semigroups not only have a canonical set of generators, but also exhibit a very convenient atomic structure. In the next section, we explore how these desirable properties behave in the more general setting of Puiseux monoids. \\
	
	\section{Atomic Characterization of Puiseux Monoids} \label{sec:Atomic Characterization of Puiseux Monoids}
	
	We start this section with a brief discussion on how Puiseux monoids show up naturally in commutative ring theory. We explain their connection to the field of Puiseux series, justifying our choice of the name Puiseux. Then we move to study the atomicity of the family of Puiseux monoids; we find an atomic characterization and two sufficient conditions for atomicity.
	
	Let $\ff$ be a field. A \emph{valuation} on $\ff$ is a map $\text{val} \colon \ff \to \rr \cup \{\infty\}$ satisfying the following three axioms:
	\begin{enumerate}
		\setlength\itemsep{3pt}
		\item $\text{val}(r) = \infty$ \ if and only if $r=0$;
		\item $\text{val}(rs) = \text{val}(r) + \text{val}(s)$ \ for all \ $r,s \in \ff^\times$;
		\item $\text{val}(r + s) \ge \min\{\text{val}(r), \text{val}(s) \}$ \ for all \ $r,s \in \ff^\times$.
	\end{enumerate}
	
	\begin{example}
		Take $\ff$ to be the field of Laurent series $\cc((T))$ in the formal variable $T$. Consider the map $\text{val}_L \colon \cc((T)) \to \rr \cup \{\infty\}$ defined by
		\[
			\text{val}_L\bigg(\sum_{n \ge N} c_n T^n\bigg) = \min\{n \in \zz_{\ge N} \mid c_n  \neq 0 \}
		\]
		if $\sum_{n \ge N} c_n T^n \neq 0$, and $\text{val}(0) = \infty$. It is not difficult to verify that the function $\text{val}_L$ is a valuation on $\cc((T))$; this is a standard result that is explained in many introductory textbook in algebra.
	\end{example}
	
	The algebraic closure of the field of Laurent series $\cc((T))$ in the formal variable $T$ is denoted by $\cc\{\!\{T\}\!\}$ and called the field of \emph{Puiseux series}; it was first studied by Puiseux in \cite{vP1850}. We can write the field of Puiseux series as
	\[
		\cc\{\!\{T\}\!\} = \bigcup_{d \in \nn} \cc\big(\big(T^{\frac{1}{d}}\big)\big),
	\]
	where $\cc((T^{1/d}))$ is the field of Laurent series in the formal variable $T^{1/d}$. The nonzero elements in $\cc\{\!\{T\}\!\}$ are formal power series of the form
	\[
		c(T) = c_1T^{\frac{n_1}d} + c_2T^{\frac{n_2}d} + \dots,
	\]
	where $c_1, c_2, \dots$ are complex numbers such that $c_1 \neq 0$, the denominator $d$ is a natural number, and $n_1 < n_2 < \dots$ are integers. Also, the function $\text{val}_P \colon \cc\{\!\{T\}\!\} \to \rr \cup \{\infty\}$ mapping $c(T)$ to $n_1/d$ and $0$ to $\infty$ is a valuation on $\cc\{\!\{T\}\!\}$.
	
	Let $R$ be a subring of $\cc\{\!\{T\}\!\}$. By the second axiom in the definition of valuation, the image of $R$ under $\text{val}_P$ is closed under addition. Since $\text{val}_P(1) = 0$, it follows that $\text{val}_P(R)$ is an additive submonoid of $\qq$. In particular, Puiseux monoids arise naturally in commutative ring theory as images under $\text{val}_P$ of subdomains of $\cc\{\!\{T\}\!\}$ of positive valuations. Given this connection, the monoids investigated in this paper are named Puiseux, honoring the French mathematician Victor A. Puiseux (1820-1883). \\
	
	We now proceed to study the atomic structure of Puiseux monoids. The family of Puiseux monoids is a natural generalization of that of numerical semigroups. The following proposition, whose proof follows immediately, characterizes those Puiseux monoids isomorphic to numerical semigroups.
	
	\begin{prop} \label{prop:finitely generated positive rational monoids}
		A Puiseux monoid is isomorphic to a numerical semigroup if and only if it is finitely generated.
	\end{prop}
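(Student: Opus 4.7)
The proposition is essentially a translation between two easy facts: numerical semigroups are (by the preliminary section) finitely generated, and any finitely generated submonoid of $\qq_{\ge 0}$ can be rescaled to a submonoid of $\nn_0$ with coprime generators. So my plan is to handle each direction in a single short paragraph.

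For the forward direction, suppose a Puiseux monoid $M$ is isomorphic to a numerical semigroup $N$. As noted in Section~\ref{sec:preliminary}, $N$ has a (unique) finite minimal generating set, so $N$ is finitely generated; being finitely generated is preserved under monoid isomorphism, so $M$ is finitely generated.

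For the reverse direction, let $M = \langle r_1, \dots, r_n \rangle$ be a finitely generated Puiseux monoid. Discarding the zero generator if it appears, we may assume $r_i \in \qq_{>0}$ for each $i$. Set
\[
    d = \mathrm{lcm}\bigl(\mathsf{d}(r_1), \dots, \mathsf{d}(r_n)\bigr) \qquad \text{and} \qquad g = \gcd\bigl(dr_1, \dots, dr_n\bigr),
\]
and consider the map $\varphi \colon M \to \qq_{\ge 0}$ defined by $\varphi(x) = dx/g$. The hard part, such as it is, is just to check that the image of $\varphi$ really is a numerical semigroup. The map $\varphi$ is clearly an injective monoid homomorphism (it is the restriction of multiplication by the positive rational $d/g$), and
\[
    \varphi(M) = \bigl\langle dr_1/g, \dots, dr_n/g \bigr\rangle.
\]
Each generator $dr_i/g$ lies in $\nn$ by the choice of $d$ and $g$, and by definition $\gcd(dr_1/g, \dots, dr_n/g) = 1$, so $\varphi(M)$ is a submonoid of $\nn_0$ generated by positive integers with trivial gcd. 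A standard fact about numerical semigroups (see \cite{GR09}) gives that $\nn_0 \setminus \varphi(M)$ is finite, so $\varphi(M)$ is a numerical semigroup isomorphic to $M$. I anticipate no real obstacle here; the only point requiring any care is remembering to divide by the gcd of the cleared generators so that the resulting submonoid of $\nn_0$ actually has finite complement.
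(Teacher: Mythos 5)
Your proof is correct and is exactly the argument the paper has in mind; the paper omits the proof entirely, remarking only that it ``follows immediately,'' and the rescaling-by-$d/g$ construction you write out is the standard way to fill that gap. One microscopic caveat worth being aware of (though the paper itself glosses over it): the trivial monoid $\{0\}$ is finitely generated but is not a numerical semigroup under the paper's definition, so strictly speaking the equivalence is meant for nontrivial Puiseux monoids; after you discard the zero generator you should note that at least one $r_i$ remains. Otherwise the argument is complete.
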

	
	Let $M$ be a Puiseux monoid. If $S = \{s_1, s_2, \dots \}$ is a set of rational numbers generating $M$, instead of $M = \langle S \rangle$ sometimes we write $M = \langle s_1, s_2, \dots \rangle$, omitting the brackets in the description of $S$. We say that $M$ is \emph{minimally} generated by $S$ if no proper subset of $S$ generates $M$. In clear contrast to numerical semigroups, there are Puiseux monoids that are neither atomic nor finitely generated; see examples below. In addition, unlike numerical semigroups, not every Puiseux monoid is atomic. In fact, there are nontrivial Puiseux monoids containing no atoms at all. On the other hand, there are atomic and non-atomic Puiseux monoids having infinitely many atoms. The following examples illustrate the facts just mentioned.
	
	\begin{example} \label{ex:nonfinite generated with finite number of atoms trivial}
		Fix a prime number $p$. Let $M$ be the Puiseux monoid generated by the set $S = \{1/p^n \mid n \in \nn\}$. Although $M$ is not finitely generated, its set of atoms is empty. This is because $\mathcal{A}(M) \subseteq S$ and $1/p^n$ is the sum of $p$ copies of $1/p^{n+1}$ for every positive integer $n$.
	\end{example}
	
	\begin{example} \label{ex:atomic example with infinitely many atoms}
		Let $P$ be the set comprising all prime numbers, and consider the Puiseux monoid $M = \langle 1/p \mid p \in P \rangle$. We shall check that $1/p$ is an atom for every $p \in P$. For a prime $p$, suppose
		\[
			\frac{1}{p} = \frac{1}{p_1} + \dots + \frac{1}{p_n},
		\]
		where $n$ is a natural number and the $p_k$ are not necessarily distinct primes. Setting $m = p_1 \dots p_n$ and $m_k = m/p_k$ for $k = 1,\dots,n$, we obtain $m/p = m_1 + \dots + m_n \in \nn$. Therefore $p$ divides $m$, and so $p_k = p$ for some $k$. Thus, $n=1$, which means that $1/p \in \mathcal{A}(M)$. Since $M$ is generated by atoms, it is atomic. Finally, it follows that $M$, albeit atomic, is not isomorphic to a numerical semigroup; to confirm this, note that $M$ contains infinitely many atoms.
	\end{example}
	
	\begin{example} \label{ex:nonfinite generated with infinitely many atoms}
		Let $p_1, p_2, \dots$ be an enumeration of the odd prime numbers. Let $M$ be the Puiseux monoid generated by the set $S \cup T$, where $S = \{1/2^n \mid n \in \nn\}$ and $T = \{1/p_n \mid n \in \nn\}$. It is easy to check, in the same way we did in Example \ref{ex:atomic example with infinitely many atoms}, that $1/p_n$ is an atom of $M$ for each $n \in \nn$. On the other hand, it follows immediately that $1/2^n \notin \mathcal{A}(M)$ for any $n \in \nn$. Since $T \subseteq \mathcal{A}(M) \subseteq S \cup T$ and $S \cap \mathcal{A}(M)$ is empty, one has $\mathcal{A}(M) = T$. We verify now that $1/2^n$ cannot be written as a sum of atoms for any $n \in \nn$. Suppose, by way of contradiction, that for some $n \in \nn$ there exist a positive integer $k$ and non-negative coefficients $c_1, \dots, c_k$ satisfying
		\begin{equation}
			\frac{1}{2^n} = \sum_{i=1}^k c_i\frac{1}{p_i}. \label{eq:example of nonatomic PM with infinitely many atoms}
		\end{equation}
		Multiplying \eqref{eq:example of nonatomic PM with infinitely many atoms} by $m = p_1 \dots p_k$, one gets
		\[
			\frac{m}{2^n} = \sum_{i=1}^k c_i m_i \in \nn,
		\]
		where $m_i = m/p_i$. This implies that $2^n$ divides $m$. Since $m$ is odd, we get a contradiction. Hence $1/2^n$ cannot be written as a sum of atoms for any $n \in \nn$. Consequently, $M$ is a non-atomic monoid with infinitely many atoms.
	\end{example}
	
	Like numerical semigroups, Puiseux monoids are reduced. Therefore the set of atoms of a Puiseux monoid is contained in every set of generators. As we mentioned before, a numerical semigroup has a unique minimal set of generators, namely its set of atoms. Theorem~\ref{theo:finitely generated rational monoids} shows that having a (unique) minimal set of generators characterizes the family of atomic Puiseux monoids.
	
	\begin{theorem} \label{theo:finitely generated rational monoids}
		If $M$ is a Puiseux monoid, the following conditions are equivalent:
		\vspace{-3pt}
		\begin{enumerate}
			\setlength\itemsep{3pt}
			\item $M$ contains a minimal set of generators;
			\item $M$ contains a unique minimal set of generators;
			\item $M$ is atomic.
		\end{enumerate}
	\end{theorem}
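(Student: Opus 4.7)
The plan is to prove the circular chain $(3) \Rightarrow (2) \Rightarrow (1) \Rightarrow (3)$, exploiting the fact (noted just before the theorem) that since $M$ is reduced, the set of atoms $\mathcal{A}(M)$ is contained in every generating set of $M$.

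For $(3) \Rightarrow (2)$, if $M$ is atomic then $\mathcal{A}(M)$ generates $M$; since $\mathcal{A}(M)$ is contained in every generating set, it is the unique minimal generating set. The implication $(2) \Rightarrow (1)$ is immediate.

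The main content is $(1) \Rightarrow (3)$. Let $S$ be a minimal generating set of $M$. I will show that every $s \in S$ is an atom, from which $S \subseteq \mathcal{A}(M)$ and hence $M = \langle S \rangle \subseteq \langle \mathcal{A}(M) \rangle$, giving atomicity. Fix $s \in S$ and suppose for contradiction that $s = x + y$ with $x, y \in M \setminus \{0\}$ (recall $M^\times = \{0\}$ since $M$ is reduced). Since $S$ generates $M$, I can write
\[
x = \sum_{i=1}^k c_i s_i, \qquad y = \sum_{j=1}^\ell d_j t_j,
\]
with $s_i, t_j \in S$ and $c_i, d_j \in \nn$. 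The key observation is that $s$ cannot appear among the $s_i$ or $t_j$: if, say, $s_{i_0} = s$ with $c_{i_0} \geq 1$, then $x \geq c_{i_0} s \geq s$, contradicting $x = s - y < s$ (we are using $y > 0$ and that all elements of $M$ are non-negative rationals). Substituting these representations into $s = x + y$ expresses $s$ as a non-negative integer combination of elements of $S \setminus \{s\}$, which means $\langle S \setminus \{s\} \rangle = M$, contradicting the minimality of $S$.

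The only mildly delicate step is the observation that $s$ itself cannot occur in the decompositions of $x$ and $y$; this is where I use crucially that $M$ is a Puiseux monoid, so that the usual order on $\qq_{\geq 0}$ forces the coefficient of $s$ in any representation of an element strictly smaller than $s$ to be zero. Everything else is routine manipulation with generating sets of reduced monoids, and I do not anticipate further obstacles.
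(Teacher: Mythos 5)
Your proof is correct, and it takes a slightly different route from the paper's. The paper proves the two biconditionals $(1) \Leftrightarrow (2)$ and $(1) \Leftrightarrow (3)$ separately: the step $(1) \Rightarrow (2)$ is handled with a standalone ``two minimal generating sets must coincide'' argument (expand an element $s$ of one minimal set $S$ via the other minimal set $S'$, then re-expand via $S$, and use minimality to collapse everything to a single summand). You instead prove the chain $(3) \Rightarrow (2) \Rightarrow (1) \Rightarrow (3)$, getting uniqueness essentially for free: once you know $\mathcal{A}(M)$ generates, the universal fact that $\mathcal{A}(M)$ sits inside every generating set immediately pins it down as the unique minimal one. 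This is shorter and cleaner, and it makes explicit that the whole theorem is driven by the single fact that atoms of a reduced monoid lie in every generating set. Your $(1) \Rightarrow (3)$ is the same argument as the paper's, but you spell out the justification that the paper leaves implicit, namely that $s$ cannot appear in any $S$-decomposition of $x$ or $y$ because $x, y < s$ in the usual order on $\qq_{\ge 0}$; the paper simply asserts ``each of them can be written as a sum of elements in $S \setminus \{a\}$'' without flagging where positivity is used. Both approaches rely crucially on $M$ being a Puiseux monoid (so that ``smaller than $s$'' makes sense and rules out $s$ as a summand); neither would survive in a general reduced monoid without an order-theoretic or length-theoretic substitute.
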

	
	\begin{proof}
		First, we show that conditions (1) and (2) are equivalent. Since (1) follows immediately from (2), it suffices to prove (1) implies (2). Suppose $S$ and $S'$ are two minimal sets of generators of $M$. Take an arbitrary $s \in S$. The fact that $M = \langle S' \rangle$ leads to the existence of $n \in \nn$ and $s'_1, \dots, s'_n \in S'$ such that $s = s'_1 + \dots + s'_n$. Because $S$ also generates $M$, for each $i = 1, \dots, n$, we have $s'_i = s_{i1} + \dots + s_{in_i}$ for some $n_i \in \nn$ and $s_{ij} \in S$ for $j \in \{1, \dots, n_i\}$. As a result,
		\[
			s = \sum_{i=1}^n s'_i = \sum_{i=1}^n \sum_{j=1}^{n_i} s_{ij}.
		\]
		The minimality of $S$ implies $n=1$ and, therefore, one gets $s = s'_1 \in S'$. Then $S \subseteq S'$ and, using a similar argument, we can check that $S' \subseteq S$. Hence, if a minimal set of generators exists, then it must be unique.
		
		Now we prove that (1) and (3) are equivalent. First, assume condition (1) holds. Let $S$ be a minimal set of generators of $M$. Let us show that every element in $S$ is an atom. Suppose, by way of contradiction, that $a \in S$ is not an atom. So $a = x + y$ for some $x,y \in M^\bullet$. Since $x$ and $y$ are both strictly less than $a$, each of them can be written as a sum of elements in $S \! \setminus \! \{a\}$. As a result, $a = x + y \in \langle S \! \setminus \! \{a\} \rangle$, contradicting the minimality of $S$. Therefore $S \subseteq \mathcal{A}(M)$. As $\mathcal{A}(M)$ must be contained in any set of generators, $S = \mathcal{A}(M)$. Thus, $M$ is atomic, which is condition~(3). Finally, we check (3) implies (1).  Assume $M$ is atomic, i.e., $M = \langle \mathcal{A}(M) \rangle$. As $M$ is reduced, no atom can be written as a sum of positive elements of $M$. Hence $\mathcal{A}(M)$ is a minimal set of generators.
	\end{proof}
	
	In contrast with numerical semigroups, there are Puiseux monoids containing no minimal sets of generators. When $M$ is not atomic we still have $\mathcal{A}(M) \subseteq S$ for every minimal set of generators $S$. Nevertheless, $\mathcal{A}(M)$ might not generate $M$, as Example~\ref{ex:nonfinite generated with infinitely many atoms} shows. In fact, $M$ can fail to be finitely generated and still have finitely many atoms; Example~\ref{ex:nonfinite generated with finite number of atoms trivial} sheds light upon this situation. \\

	Let $p$ be a prime. For a nonzero integer $a$, define $\pval(a)$ to be the exponent of the maximal power of $p$ dividing $a$, and set $\pval(0) = \infty$. In addition, for $b \in \zz \! \setminus \! \{0\}$, set $\pval(a/b) = \pval(a) - \pval(b)$. It follows immediately that the map $\pval \colon \qq \to \rr \cup \{\infty\}$, which is called the $p$-\emph{adic valuation}, is an actual valuation on $\qq$. In particular, it satisfies the third condition in the definition of valuation given before, that is
	\begin{equation}
		\pval(r + s) \ge \min\{\pval(r), \pval(s) \} \text{ for all } \ r,s \in \qq^\times. \label{eq:semiadditivity of valuations}
	\end{equation}
	
	\begin{definition}
		Let $P$ be a set of primes. A Puiseux monoid $M$ \emph{over} $P$ is a Puiseux monoid such that $\pval(m) \ge 0$ for every $m \in M$ and $p \notin P$.
	\end{definition}
	If $P$ is finite, then we say that $M$ is a \emph{finite} Puiseux monoid over $P$. The Puiseux monoid $M$ is said to be \emph{finite} if there exists a finite set of primes $P$ such that $M$ is finite over $P$. To simplify the notation, if $P$ contains only one prime $p$, we write Puiseux monoid over $p$ instead of Puiseux monoid over $\{p\}$. We will see that the $p$-adic valuation maps play an important role in describing the atomic configuration of Puiseux monoids over $P$. For example, in Proposition \ref{prop:bounded below valuation implies atomicity}, we check that a finite Puiseux monoid over $P$ is atomic if for each $p \in P$ the sequence of $p$-adic valuations of its generators is bounded from below.
	
	The remainder of this section is devoted to finding characterization criteria for atomicity of Puiseux monoids. Proposition \ref{prop:bounded below valuation implies atomicity} and Theorem \ref{theo:sufficient condition for atomicity} identify two atomic subfamilies of Puiseux monoids.
	
	\begin{prop} \label{prop:bounded below valuation implies atomicity}
		For a Puiseux monoid $M$ the following conditions are equivalent:
		\begin{enumerate}
			\item $M$ is finite and $\{\pval(M)\}$ is bounded from below for every prime $p$;
			\item $M$ is finite, and $M = \langle R \rangle$ implies that $\{\pval(R)\}$ is bounded from below for every prime $p$;
			\item The denominator set $\mathsf{d}(M^\bullet)$ is bounded;
			\item If $M = \langle R \rangle$, then the denominator set $\mathsf{d}(R^\bullet)$ is bounded.
		\end{enumerate}
		If one (and so all) of the conditions above holds, then $M$ is atomic. 
	\end{prop}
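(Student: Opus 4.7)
The plan is to derive all four equivalences from a single dictionary translating between boundedness of denominators and lower bounds on $p$-adic valuations, and then to deduce atomicity by clearing denominators and invoking the fact that every submonoid of $\nn_0$ is atomic. First I would prove the dictionary in the following form: for any nonempty $R \subseteq \qq_{>0}$, the denominator set $\mathsf{d}(R)$ is bounded if and only if both (a) there is a finite set of primes $P$ such that $p \mid \mathsf{d}(r)$ forces $p \in P$ for every $r \in R$, and (b) for every prime $p$, $\pval(R)$ is bounded from below. The forward half uses $\pval(\mathsf{d}(r)) \le \log_p \max \mathsf{d}(R)$ together with the observation that only primes $p \le \max \mathsf{d}(R)$ can occur in any denominator, while the reverse half clears denominators: if $-N_p$ is a uniform lower bound for $\pval$ on $R$ for each $p \in P$, then every $\mathsf{d}(r)$ divides the fixed integer $\prod_{p \in P} p^{N_p}$.

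Since $M$ being finite over $P$ is precisely the condition that $p \mid \mathsf{d}(m)$ implies $p \in P$ for all $m \in M^\bullet$, applying the dictionary to $R = M^\bullet$ yields (1) $\Leftrightarrow$ (3). The implications (1) $\Rightarrow$ (2) and (3) $\Rightarrow$ (4) are then immediate from the inclusion $R \subseteq M$, while (2) $\Rightarrow$ (1) and (4) $\Rightarrow$ (3) follow by specializing to the generating set $R = M^\bullet$; this closes the loop of equivalences.

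For the atomicity conclusion I would assume (3) with $\mathsf{d}(M^\bullet) \subseteq [1, D]$ and set $L := D!$. Every denominator in $M^\bullet$ then divides $L$, so the multiplication-by-$L$ map $\varphi \colon M \to \nn_0$ is an injective monoid homomorphism onto a submonoid of $\nn_0$; because any submonoid of $\nn_0$ is atomic by a straightforward strong induction (a non-atom $n$ splits as $n = a + b$ with $0 < a, b < n$, each of which is a sum of atoms by the inductive hypothesis), $M \cong \varphi(M)$ is atomic. The only step requiring any real thought is the reverse half of the dictionary, where the separate valuation bounds for the finitely many primes in $P$ must be aggregated into a single uniform denominator bound; the rest of the argument is essentially bookkeeping.
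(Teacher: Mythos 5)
Your proposal is correct and follows essentially the same strategy as the paper. The paper also establishes the four equivalences by translating between denominator bounds and lower bounds on $p$-adic valuations, then proves atomicity by clearing denominators (multiplying by $\prod_i p_i^{-m_i}$ rather than $D!$) to realize $M$ as isomorphic to a submonoid of $\nn_0$; the only cosmetic difference is that you prove atomicity of submonoids of $\nn_0$ directly by strong induction, while the paper invokes the fact that such a submonoid is isomorphic to a numerical semigroup.
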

	
	\begin{proof}
		Condition (1) trivially implies condition (2). Assume condition (2), and let $P$ be a finite set of primes over which $M$ is finite. For all $p \in P$ one has $\{\pval(R)\}$ is bounded from below; therefore $\mathsf{d}(R^\bullet)$ is finite. Taking $m$ to be the product of all elements in $\mathsf{d}(R^\bullet)$, one has $d \mid m$ for all $d \in \mathsf{d}(M^\bullet)$. Hence $\mathsf{d}(M^\bullet)$ is finite and (3) holds. Condition (3) implies condition (4) trivially. Finally, assume condition (4). Since $\mathsf{d}(R^\bullet)$ is bounded, so is $\mathsf{d}(M^\bullet)$. As a consequence, only finitely many primes divide elements in $\mathsf{d}(M^\bullet)$. The boundedness of $\mathsf{d}(M^\bullet)$ also implies that $\{\pval(M)\}$ is bounded from below for every prime $p$, which is condition (1).
		
		Now we will check that condition (2) implies that $M$ is atomic. Suppose that $M$ is finite over $P = \{p_1, \dots, p_n\}$ for some $n \in \nn$. Let $R$ be a subset of rationals such that $M = \langle R \rangle$. Set
		\[
			m_i = \min \big\{0, \min_{r \in R^\bullet}\{{\pval}_i(r)\}\big\}
		\]
		for each $i \in \{1, \dots, n\}$, and take $m = p_1^{-m_1} \dots \, p_n^{-m_n}$. The function $\varphi \colon M \to mM$ defined by $\varphi(x) = mx$ is an isomorphism. An arbitrary $x \in M^\bullet$ can be written as $x = c_1r_1 + \dots + c_kr_k$, where $k \in \nn$ while $c_i \in \nn$ and $r_i \in R^\bullet$ for every $i \in \{1,\dots,k\}$. By inequality \eqref{eq:semiadditivity of valuations}, one has
		\[
			{\pval}_j(mx) = {\pval}_j\bigg(m\sum_{i=1}^k c_i r_i\bigg) \ge \ \min_{1 \le i \le k}\{{\pval}_j(m c_i r_i) \} \ge \ \min_{1 \le i \le k}\{{\pval}_j(c_i)\} \ge 0
		\]
		for $j = 1, \dots, n$. Since $\pval(mM^\bullet) \subseteq \nn_0$ for each $p \in P$, it follows that $mM$ is isomorphic to a numerical semigroup, and so it is atomic. Hence $M$ is also atomic, as expected.
	\end{proof}
	
	If one of the four conditions of Proposition~\ref{prop:bounded below valuation implies atomicity} fails, namely that $\{\pval(r) \mid r \in R\}$ is not bounded from below for some $p \in P$, then $M$ might not be atomic. This is illustrated in Example \ref{ex:nonfinite generated with finite number of atoms trivial}. Besides, if we allowed $|P| = \infty$, Proposition \ref{prop:bounded below valuation implies atomicity} would not hold, as we can see in the next example.
	
	\begin{example}
		Let $P = \{p_1, p_2, \dots\}$ be an infinite set of primes. Then we define the Puiseux monoid over $P$
		\[
			M = \bigg\langle \frac{1}{d_1}, \frac{1}{d_2}, \dots \bigg\rangle, \ \text{ where } \ d_n = p_1 \dots p_n
		\] 
		for every $n \in \nn$. Observe that $1/d_n \notin \mathcal{A}(M)$ for any $n \in \nn$; this is because $1/d_n$ is the sum of $p_{n+1}$ copies of $1/d_{n+1}$. Since $\mathcal{A}(M)$ is contained in any set of generators, the fact that $1/d_n \notin \mathcal{A}(M)$ for every natural $n$ implies that $\mathcal{A}(M)$ is empty. Therefore $M$ is not atomic. In Section \ref{sec:Strongly Bounded PM} we will give a special name to the Puiseux monoids whose set of atoms is empty.
	\end{example}
	
	\begin{theorem} \label{theo:sufficient condition for atomicity}
		Let $M$ be a Puiseux monoid. If $0$ is not a limit point of $M$, then $M$ is atomic.
	\end{theorem}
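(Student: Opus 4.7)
The plan is to exploit the fact that ``$0$ is not a limit point of $M$'' gives a uniform positive lower bound on the nonzero elements of $M$, and to use this to bound the length of any decomposition of a fixed element into nonzero summands.

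More precisely, since $0$ is not a limit point of $M$, there exists $\varepsilon > 0$ such that $M \cap (0, \varepsilon) = \emptyset$, i.e.\ every element of $M^\bullet$ satisfies $m \ge \varepsilon$. Now fix an arbitrary $x \in M^\bullet$. For any decomposition $x = x_1 + \dots + x_n$ with $x_i \in M^\bullet$, each summand is at least $\varepsilon$, forcing $n \le x/\varepsilon$. Hence the set of positive integers $n$ for which such a decomposition of $x$ exists is nonempty (take $n=1$) and bounded above, so it has a maximum, call it $N$.

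I would then choose a decomposition $x = x_1 + \dots + x_N$ realizing this maximum and argue that each $x_i$ must be an atom. Indeed, $M$ is reduced (being a submonoid of $\qq_{\ge 0}$), so if some $x_i$ were not an atom we could write $x_i = y + z$ with $y, z \in M^\bullet$, producing a decomposition of $x$ into $N+1$ nonzero summands and contradicting maximality of $N$. Therefore $x \in \langle \mathcal{A}(M) \rangle$. Since $x \in M^\bullet$ was arbitrary, $M = \langle \mathcal{A}(M) \rangle$, so $M$ is atomic (equivalently, by Theorem~\ref{theo:finitely generated rational monoids}, $M$ contains a minimal set of generators, namely $\mathcal{A}(M)$).

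The proof is essentially an extremal argument with no serious obstacle; the only subtlety worth being explicit about is the appeal to $M$ being reduced, which is what guarantees that ``not an atom'' lets us split off two \emph{nonzero} summands (so that the new decomposition genuinely has strictly more terms). Everything else is a one-line length bound from the gap below $\varepsilon$.
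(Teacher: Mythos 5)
Your proof is correct, but it takes a genuinely different route from the paper's. The paper argues by contradiction: it assumes $M$ is not atomic, picks a non-representable element, and repeatedly splits off a nonzero summand (again non-representable) to manufacture a strictly decreasing sequence $\{r_n\}$ in $M^\bullet$; since a bounded monotone sequence of reals converges, it is Cauchy, so the differences $r'_{n+1} = r_n - r_{n+1} \in M^\bullet$ tend to $0$, contradicting the hypothesis. You instead argue directly: the gap $M \cap (0,\varepsilon) = \emptyset$ gives the uniform bound $n \le x/\varepsilon$ on the number of nonzero summands in any decomposition of a fixed $x$, so a decomposition of maximal length exists, and maximality (together with $M$ being reduced, so a non-atom splits into two nonzero pieces) forces every summand to be an atom. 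Your extremal argument is more elementary — it replaces the convergence/Cauchy machinery with a finite integer bound and avoids contradiction entirely — and it makes the role of the $\varepsilon$-gap completely explicit. The paper's version, on the other hand, is phrased so as to localize the failure of atomicity in a single shrinking sequence, which fits the recurring theme there of detecting non-atomicity via generators accumulating at $0$. Both are valid; yours is arguably the cleaner proof of this particular statement.
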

	
	\begin{proof}
		Assume, by way of contradiction, that $M$ is not atomic. Let $N$ be the nonempty subset of $M^\bullet$ comprising all the elements that cannot be written as a sum of atoms. For $r_1 \in N$ there exist positive elements $r_2$ and $r'_2$ in $M$ such that $r_1 = r_2 + r'_2$. Either $r_2$ or $r'_2$ must be contained in $N$; otherwise $r_1$ would not belong to $N$. Let us suppose then, without loss of generality, that $r_2 \in N$. We have $r_1 > r_2$ and $r_1 - r_2 = r'_2 \in M$. Because $r_2 \in N$, there exist $r_3, r'_3 \in M^\bullet$ such that $r_2 = r_3 + r'_3$ and either $r_3 \in N$ or $r'_3 \in N$. Assume $r_3 \in N$. Again, one obtains $r_2 > r_3$ and $r_2 - r_3 = r'_3 \in M$. Continuing in this fashion, we can build two sequences $\{r_n\}$ and $\{r'_n\}$ of elements of $M$ such that $\{r_n\}$ is decreasing and $r_n - r_{n+1} = r'_{n+1} \in M$ for every $n \in \nn$. Since $\{r_n\}$ is a decreasing sequence of positive terms, it converges, and so it is a Cauchy sequence. This implies that the sequence $\{r'_n\}$ converges to zero. But it contradicts the fact that $0$ is not a limit point of $M$. Thus, $M$ is atomic, which establishes the theorem.
	\end{proof}
	
	The converse of Theorem \ref{theo:sufficient condition for atomicity} does not hold. The next example not only illustrates the failure of its converse, but also indicates that Proposition \ref{prop:bounded below valuation implies atomicity} and Theorem~\ref{theo:sufficient condition for atomicity} are not enough to fully characterize the family of atomic Puiseux monoids.
	
	\begin{example}
		Let $p_1, p_2, \dots$ be an enumeration of the odd prime numbers. Define the sequence of positive integers $\{k_n\}$ as follows. Take $k_1 \in \nn$ to be arbitrary, and once $k_n$ has been chosen, take $k_{n+1} \in \nn$ such that both inequalities $k_{n+1} > k_n$ and $2^{k_{n+1}}p_1\dots p_n > 2^{k_n + 1}p_1\dots p_{n+1}$ hold. Now define the Puiseux monoid
		\[
			M = \langle r_1, r_2, \dots \rangle, \ \text{ where } \ r_n = \frac{p_1 \dots p_n}{2^{k_n}}. 
		\]
		We verify that $0$ is a limit point of $M$ and that $M$ is atomic. The way we defined the sequence $\{k_n\}$ ensures that $r_{n+1} < r_n/2$ for every $n \in \nn$. As a result, the sequence $\{r_n\}$ converges to $0$ and, hence, $0$ is a limit point of $M$. Additionally, for $j \in \nn$, suppose
		\begin{equation}
			r_j = \sum_{i=1}^m c_ir_i = \sum_{i=1}^m c_i \frac{p_1 \dots p_i}{2^{k_i}}, \label{eq:atomicity in zero limit point example}
		\end{equation}
		for some $m \in \nn$ and coefficients $c_1, \dots, c_m \in \nn_0$. As $r_i > r_j$ when $i < j$, we get $c_i = 0$ for $i < j$ and $c_j \in \{0,1\}$. If $c_j = 0$, then \eqref{eq:atomicity in zero limit point example} can be written as
		\begin{equation}
			2^{k_m - k_j}p_1 \dots p_j = \sum_{i=j+1}^m 2^{k_m - k_i}c_ip_1 \dots p_i. \label{eq:atomicity in zero limit point example 2}
		\end{equation}
		Every summand in the right-hand side of \eqref{eq:atomicity in zero limit point example 2} is divisible by $p_{j+1}$, which contradicts that the left-hand side of \eqref{eq:atomicity in zero limit point example 2} is not divisible by $p_{j+1}$. Therefore $c_j = 1$, and so $r_j$ is an atom. Since $M$ is generated by atoms, it is atomic.
	\end{example}

	Clearly, if $M$ is a Puiseux monoid as in Theorem~\ref{theo:sufficient condition for atomicity} (i.e., $0$ is not a limit point of $M$), then every submonoid of $M$ must also be atomic as a result of Theorem~\ref{theo:sufficient condition for atomicity}. However, in general, it is not true that every submonoid of an atomic monoid is atomic. The next example illustrates this observation.
	
	\begin{example}
		Let $\{p_n\}$ be the sequence comprising the odd prime numbers in strictly increasing order. Then consider the Puiseux monoid $M = \langle S \rangle$, where
		\[
			 S = \bigg\{ \frac 1{2^np_n} \ \bigg{|} \ n \in \nn \bigg\}.
		\]
		Since each odd prime divides exactly one element of the set $\mathsf{d}(S)$, it follows that $\mathcal{A}(M) = S$. Hence $M$ is atomic. On the other hand, the element $1/2^n$ is the sum of $p_n$ copies of the atom $1/(2^np_n)$ for every $n \in \nn$. Thus, the monoid
		\[
			N = \langle 1/2^n \mid n \in \nn \rangle
		\]
		contains no atoms, which immediately implies that $N$ is not atomic. Therefore $N$ is a submonoid of the atomic monoid $M$ that fails to be atomic.
	\end{example}
	
	Proposition \ref{prop:bounded below valuation implies atomicity} and Theorem \ref{theo:sufficient condition for atomicity} give rise to large families of atomic monoids. Theorem \ref{theo:sufficient condition for atomicity} applies, in particular, when a Puiseux monoid is generated by an eventually increasing sequence. Many factorization invariants of numerical semigroups have been investigated during the last two decades. For example, the set of lengths, elasticity, delta set, and catenary/tame degree have been actively studied in terms of minimal sets of generators (see \cite{ACHP07,BOP16,CCMMP14,CHK09,mO12} and references therein). Studying these factorization invariants on the atomic Puiseux monoids provided by Theorem \ref{theo:sufficient condition for atomicity} would contribute significantly to understanding their algebraic and combinatorial structure. \\
		
	\section{Strongly Bounded Puiseux Monoids} \label{sec:Strongly Bounded PM}
	
	In this section, we restrict attention to the atomic structure of those Puiseux monoids that can be generated by a subset $S \subset \qq$ satisfying that $\mathsf{n}(S)$ is bounded. To be more precise, we say that a subset $S$ of rational numbers is \emph{strongly bounded} if its numerator set, $\mathsf{n}(S)$, is bounded.
	
	\begin{definition}
		A Puiseux monoid is \emph{bounded} (resp., \emph{strongly bounded}) if it can be generated by a bounded (resp., strongly bounded) set of rational numbers.
	\end{definition}
	
	Although, for the remainder of this paper, we focus on studying the subfamily of strongly bounded Puiseux monoids, they are by no means the only subfamily containing atomic representatives. The following proposition explains a way of constructing a family of atomic Puiseux monoids whose members are not strongly bounded.
	
	\begin{prop} \label{prop:PM atomic not strongly bounded}
		There exist infinitely many atomic Puiseux monoids that are not strongly bounded.
	\end{prop}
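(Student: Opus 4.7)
The plan is to exhibit an atomic Puiseux monoid whose set of atoms has unbounded numerator set, and then to promote it to an infinite family by passing to a descending chain of submonoids. The key observation, coming from Theorem~\ref{theo:finitely generated rational monoids}, is that whenever $M$ is atomic the set $\mathcal{A}(M)$ is contained in every generating set of $M$; hence, to certify that an atomic $M$ is not strongly bounded, it suffices to show that $\mathsf{n}(\mathcal{A}(M))$ is unbounded.

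Let $p_1 < p_2 < \cdots$ be the primes in increasing order, and for every $n \in \nn$ set
\[
    a_n \ = \ 1 + \frac{1}{p_n} \ = \ \frac{p_n + 1}{p_n},
\]
which is in lowest terms since $\gcd(p_n + 1, p_n) = 1$. For each $k \in \nn$, consider the Puiseux monoid $M_k = \langle a_n \mid n \ge k \rangle$. I would argue that each $M_k$ is atomic with $\mathcal{A}(M_k) = \{a_n \mid n \ge k\}$, and that the inclusions $M_1 \supsetneq M_2 \supsetneq \cdots$ are strict, which will yield the required infinite family.

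For the atom identification I would use a size-of-sum argument. Since $1 < a_n < 2$ for every $n$, any $\nn_0$-linear combination $\sum_i c_i a_{m_i}$ with $\sum_i c_i \ge 2$ exceeds $2$, whereas each $a_n$ is strictly less than $2$. Thus, any expression $a_n = \sum_i c_i a_{m_i}$ in $M_k$ forces $\sum_i c_i \le 1$, and the possibility $\sum_i c_i = 0$ is ruled out because $a_n \ne 0$. The only remaining case is the trivial representation $a_n = a_n$, so $a_n \in \mathcal{A}(M_k)$ for every $n \ge k$, and $M_k$ is atomic. (Atomicity also follows at once from Theorem~\ref{theo:sufficient condition for atomicity}, since $0$ is bounded away from $M_k$.)

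The same size argument shows $a_k \notin M_{k+1}$: any representation of $a_k$ as a sum of generators of $M_{k+1}$ must consist of a single term $a_m$ with $m \ge k+1$, but $a_k \ne a_m$ whenever $k \ne m$ because $p_k \ne p_m$. Hence the chain $M_1 \supsetneq M_2 \supsetneq \cdots$ is strictly descending, furnishing infinitely many distinct atomic Puiseux monoids. Since $\mathsf{n}(\mathcal{A}(M_k)) = \{p_n + 1 \mid n \ge k\}$ is unbounded, no generating set of $M_k$ can have bounded numerator set, and so $M_k$ fails to be strongly bounded. The most delicate step is the atom verification, which in this construction is substantially simplified by the interval containment $a_n \in (1,2)$; without such an interval constraint one would typically have to resort to a $p$-adic valuation argument on the denominators of the generators.
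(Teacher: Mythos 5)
Your proof is correct, and it takes a genuinely different route from the paper's. The paper fixes a single prime $p$, chooses numerators $a_n$ coprime to $p$ with $a_n/p^n$ strictly increasing, and certifies atomicity by a $p$-adic valuation argument: it shows $a_n/p^n \notin M_{n-1} = \langle a_1/p, \dots, a_{n-1}/p^{n-1}\rangle$ because elements of $M_{n-1}$ have $p$-adic valuation at least $-(n-1)$, then concludes $a_n/p^n$ is an atom since it is the smallest element outside $M_{n-1}$; infinitude comes from varying $p$. Your construction instead confines all generators to the interval $(1,2)$, so that every nonzero element of $M_k$ exceeds $1$ and hence any sum of two nonunits exceeds $2$; this makes every generator an atom purely by a size count, with no valuation bookkeeping at all. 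Your chain $M_1 \supsetneq M_2 \supsetneq \cdots$ of submonoids (each obtained by discarding the smallest-index generator) is also a cleaner way to produce infinitely many distinct examples than varying the prime, and the strictness is verified by the same size argument. One small point worth making explicit, which you implicitly use: since $M_k$ is reduced, $\mathcal{A}(M_k)$ lies in every generating set of $M_k$, so unboundedness of $\mathsf{n}(\mathcal{A}(M_k)) = \{p_n+1 \mid n \ge k\}$ indeed rules out any strongly bounded generating set. The paper's approach has the side benefit of producing, for each prime $p$, an example supported on the single prime $p$ (i.e., a Puiseux monoid over $p$ in the paper's terminology), which your examples are not, since your denominators range over all primes; but for the proposition as stated your argument is simpler and fully sufficient.
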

	
	\begin{proof} 
		Let $p$ be a prime. Suppose $\{a_n\}$ is the sequence recurrently defined as follows. Choose $a_1 \in \nn$ such that $a_1 > p$. Suppose $a_1, \dots, a_n$ have been selected. Take $a_{n+1} \in \nn$ so that $\gcd(a_{n+1},p) = 1$ and $a_{n+1}/p^{n+1} > a_n/p^n$. Consider the Puiseux monoid over $p$
		\[
			M = \langle S \rangle, \ \text{ where } \ S = \bigg\{ \frac{a_n}{p^n} \ \bigg{|} \ n \in \nn \bigg\}.
		\]
		Let us check that $a_n/p^n \in \mathcal{A}(M)$ for every $n \in \nn$. Since $a_1/p$ is the smallest element in $M^\bullet$, it is an atom. By inequality \eqref{eq:semiadditivity of valuations}, for $n > 1$, the $p$-adic valuation of every element of the monoid
		\[
			M_{n-1} = \bigg\langle \frac{a_1}{p}, \dots, \frac{a_{n-1}}{p^{n-1}} \bigg\rangle
		\]
		is at least $-n+1$. As a consequence, $a_n/p^n \notin M_{n-1}$. From the fact that $a_n/p^n$ is the smallest element in $M \! \setminus \! M_{n-1}$, we deduce that it is an atom. Hence $S \subseteq \mathcal{A}(M)$ and, therefore, $\mathcal{A}(M) = S$. Because every generating set of $M$ contains $S$, and $a_n > p^n$ for $n \in \nn$, it follows that $M$ is not strongly bounded. Yet, the monoid $M$ is atomic because it is generated by atoms. Note also that $\mathcal{A}(M) = \infty$. For each prime $p$, we have found a Puiseux monoid over $p$ that is atomic but not strongly bounded; thus, there are infinitely many atomic Puiseux monoids failing to be strongly bounded. In fact, from the way we constructed the Puiseux monoid $M$ over $p$, we can infer that for each prime $p$, there are infinitely many atomic Puiseux monoids over $p$ that are not strongly bounded.
	\end{proof}
	
	We have just found a subfamily of atomic Puiseux monoids that are not strongly bounded. By contrast, it is natural to ask whether the family of strongly bounded Puiseux monoids comprises all Puiseux monoids containing no atoms. We postpone the answer to this question until we prove Proposition \ref{prop:a generating set of a BPM contains a bounded generating set}.
	
	Let us introduce some terminology for those monoids containing no atoms. An integral domain is called \emph{antimatter domain} if it contains no irreducible elements. Antimatter domains have been in-depth studied by Coykendall et al. \cite{CDM99}. However, no relevant investigation has been carried out concerning monoids containing no atoms.
	
	\begin{definition}
		Let $M$ be a monoid. If $\mathcal{A}(M)$ is empty, we say that $M$ is an \emph{antimatter} monoid.
	\end{definition}
	
	We should point out that, in general, the concepts of antimatter and atomic monoids are independent. Abelian groups are atomic and antimatter. The additive monoid $\nn_0$ is atomic, but it is not antimatter. Also, the additive monoid $\qq_{\ge 0}$  is antimatter; however, it is not atomic. Finally, the set of polynomials
	\[
		M = \{\,p(x) \in \qq[x] \ | \ p(0) \in \zz\,\}
	\]
	endowed with the standard multiplication of polynomials is not atomic; this is proved in \cite{sC14}. In addition, since every prime, seen as a constant polynomial, is an atom of $M$, one finds that $M$ is not antimatter. Having indicated the independence of antimatter and atomic monoids in a general setting, we should notice that, in the particular case of Puiseux monoids, a nontrivial atomic monoid automatically fails to be antimatter; more generally, this is actually true for every nontrivial reduced monoid.
	
	At this point we know there are infinitely many atomic Puiseux monoids failing to be strongly bounded. For the sake of completeness, we will also construct in Proposition~\ref{prop:PM antimatter not strongly bounded} an infinite subfamily of antimatter Puiseux monoids whose members fail to be strongly bounded.
	
	We know that every generating set of a Puiseux monoid $M$ contains $\mathcal{A}(M)$. In particular, if $M$ is atomic, then every generating set of $M$ contains a generating subset consisting of atoms, namely $\mathcal{A}(M)$. This suggests the question of whether every generating set of a bounded (resp., strongly bounded) Puiseux monoid contains a bounded (resp., strongly bounded) generating subset. As we show now, we can reduce any generating set of a bounded Puiseux monoid to a bounded generating subset.
	
	\begin{prop} \label{prop:a generating set of a BPM contains a bounded generating set}
		If $M$ is a bounded Puiseux monoid, then every generating set of $M$ contains a bounded generating subset.
	\end{prop}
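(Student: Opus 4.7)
The plan is to use the bounded generating set guaranteed by hypothesis as a ``bridge'' between the generic generating set $S$ and a bounded subset of $S$. Concretely, let $T$ be a bounded generating set of $M$, and fix an upper bound $B \in \qq_{>0}$ with $t \le B$ for all $t \in T$. Let $S$ be an arbitrary generating set of $M$. The goal is to build $S' \subseteq S$ which still generates $M$ and whose elements are bounded above by $B$.

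The construction is as follows. For each $t \in T$, use the fact that $S$ generates $M$ to pick a representation $t = c_1 s_1 + \cdots + c_n s_n$ with $n \in \nn$, positive coefficients $c_i \in \nn$, and distinct $s_i \in S$ (discarding summands whose coefficient would be zero). Let $S' \subseteq S$ be the union, over all $t \in T$, of the finite sets $\{s_1,\dots,s_n\}$ chosen this way. By construction $S' \subseteq S$.

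Two verifications remain. First, $S'$ generates $M$: every $m \in M$ can be written as a non-negative integer combination of elements of $T$ since $T$ generates $M$, and each such $t \in T$ is a non-negative integer combination of elements of $S'$ by construction; chaining these two expressions displays $m$ as a non-negative integer combination of elements of $S'$. Second, $S'$ is bounded by $B$: if $s \in S'$, then $s$ appears with a positive integer coefficient $c$ in some chosen representation $t = cs + \sum_{i} c_i s_i$ with all terms on the right-hand side non-negative, so $s \le cs \le t \le B$.

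There is no genuine obstacle here; the only subtlety is being careful to record each $s_i$ only when it has a strictly positive coefficient, since otherwise the inequality $s \le t$ would fail to follow from the representation. Once that convention is fixed, the containment $S' \subseteq S$, the generation property, and the uniform bound are all immediate from the setup.
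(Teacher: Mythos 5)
Your proof is correct and follows essentially the same route as the paper: in both arguments the bounded generating set is used as a bridge, and the key observation is that any $s \in S$ appearing (with positive coefficient) in a representation of a bounded element $t$ must satisfy $s \le t$ because Puiseux monoids consist of non-negative rationals. The paper collects, for each bounded generator $b$, the set of \emph{all} elements of the arbitrary generating set that divide $b$ in $M$, whereas you fix one representation per $t \in T$ and take the elements appearing in it — a slightly leaner choice that makes the generation step a one-line chaining argument, but the underlying mechanism is identical.
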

	
	\begin{proof}
		Let $R$ be a set of generators of $M$. Take $B$ to be a bounded subset of rational numbers such that $M = \langle B \rangle$. For each $b \in B$ define
		\[
			S = \bigcup_{b \in B} S_b, \ \text{ where } \ S_b = \{r \in R \mid r \text{ divides } b \text{ in } M\}.
		\]
		Since $b$ is an upper bound of $S_b$ for each $b$, the fact that $B$ is bounded implies that $S$ is also bounded. So $S$ is a bounded subset of $R$. We verify now that $S$ is a generating set of $M$. It is enough to check that $M \subseteq \langle S \rangle$. Take an arbitrary $r \in R$. Since $B$ generates $M$, there exist $k \in \nn$ and $b_1, \dots, b_k \in B$ such that $r = b_1 + \dots + b_k$. Because $M$ is generated by $R$, for each $i \in \{1,\dots,k\}$ there exist $n_i \in \nn$ and $r_{i1}, \dots, r_{in_i} \in R$ such that $b_i = r_{i1} + \dots + r_{in_i}$. Consequently, we have
		\begin{equation} \label{eq:generators of bdd Puiseux monoids contains bdd generating sets}
			r = \sum_{i=1}^k b_i = \sum_{i=1}^k \sum_{j=1}^{n_i} r_{ij}.
		\end{equation}
		Notice that for every $i = \{1,\dots,k\}$ and $j \in \{1,\dots,n_i\}$, the element $r_{ij}$ divides $b_i$ in $M$. Thus, equality \eqref{eq:generators of bdd Puiseux monoids contains bdd generating sets} forces $r \in \langle S \rangle$ and, therefore, $M = \langle R \rangle \subseteq \langle S \rangle$. Hence $S$ is a bounded subset of $R$ generating $M$.
	\end{proof}
	
	Proposition \ref{prop:a generating set of a BPM contains a bounded generating set} naturally suggests the question of whether every generating set of a strongly bounded Puiseux monoid contains a strongly bounded generating subset. Unlike its parallel statement for boundedness, this desirable claim does not hold for strongly bounded Puiseux monoids.
	
	\begin{example}
		Let $p$ be an odd prime, and let us consider the following two sets of rational numbers:
		\[
			S = \bigg\{\frac{2}{p^{2^n}} \ \bigg{|} \ n \in \nn \bigg\} \ \text{ and } \ T = \bigg\{\frac{p^{2^n} - 1}{p^{2^{n+1}}}, \frac{p^{2^n} + 1}{p^{2^{n+1}}}  \ \bigg{|} \ n \in \nn \bigg\}.
		\]
		To verify that $S$ and $T$ generate the same Puiseux monoid, it suffices to notice that
		\[
			\frac{2}{p^{2^n}} = \frac{p^{2^n} - 1}{p^{2^{n+1}}} + \frac{p^{2^n} + 1}{p^{2^{n+1}}} \ \text{ and } \ \frac{p^{2^n} \pm 1}{p^{2^{n+1}}} = \frac{p^{2^n} \pm 1}{2} \frac{2}{p^{2^{n+1}}}.
		\]
		Let $M$ be the Puiseux monoid generated by any of the sets $S$ or $T$. Since $S$ is strongly bounded then so is $M$. On the other hand, every strongly bounded subset of $T$ must contain only finitely many elements; this is because the sequences of numerators of $\{(p^{2^n} -1)/p^{2^{n+1}}\}$ and $\{(p^{2^n} +1)/p^{2^{n+1}}\}$ both increase to infinite. In addition, as $M$ is antimatter, and so non-finitely generated, any subset of $T$ generating $M$ must contain infinitely many elements. Hence we can conclude that $T$ does not contain any strongly bounded subset generating $M$.
	\end{example}
	
	Let us resume now our search for a family of antimatter Puiseux monoids failing to be strongly bounded. To accomplish this goal, we make use of Proposition \ref{prop:a generating set of a BPM contains a bounded generating set}.
	
	\begin{prop} \label{prop:PM antimatter not strongly bounded}
		There exist infinitely many antimatter Puiseux monoids that are not strongly bounded.
	\end{prop}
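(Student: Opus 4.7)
The plan is to construct, for each prime $p$, an antimatter Puiseux monoid $M_p$ over $p$ that fails to be strongly bounded. Since the groups generated by Puiseux monoids over distinct primes $p \ne q$ are non-isomorphic as abelian groups (they are the non-isomorphic Baer types $\zz[1/p]$ and $\zz[1/q]$), the resulting $M_p$ will be pairwise non-isomorphic, simultaneously providing infinitely many examples. The construction will mirror that of Proposition \ref{prop:PM atomic not strongly bounded}, with the roles of ``atom'' and ``decomposable element'' interchanged.

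Fix a prime $p$. We recursively define generators $r_n := a_n/p^{e_n}$ with $a_n \in \nn$, $\gcd(a_n,p)=1$, $a_n \to \infty$, and $\{e_n\} \subset \nn$ strictly increasing. Whereas in Proposition \ref{prop:PM atomic not strongly bounded} the $r_n$ are forced to sit outside the submonoid generated by prior terms (making each an atom), here we arrange the opposite: each $r_n$ admits a relation $r_n = \sum_i c_i r_{m_i}$ with $m_i > n$ and $c_i \in \nn$. Applying the semiadditivity \eqref{eq:semiadditivity of valuations} to such a relation modulo $p$ forces $p \mid c_{\max}$, the coefficient of the largest-indexed term. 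Guided by this, at each step we posit a relation of the form $r_n = c_n r_{n+1} + p\, r_{n+2}$, which after clearing denominators (with $e_n = n$) becomes $a_n p = c_n a_{n+1} + a_{n+2}$; we choose $c_n \in \nn$ coprime to $p$ and positive integers $a_{n+1}, a_{n+2}$ coprime to $p$ satisfying this equation, with enough latitude (by, say, first fixing $a_{n+1}$ to satisfy the relation tying $r_{n-1}$ to $r_n, r_{n+1}$ and then adjusting) to arrange $\{a_n\}$ to be unbounded.  Setting $M_p := \langle r_n : n \in \nn\rangle$, antimatter is then immediate: each $r_n$ is by construction a positive sum of two nonzero elements of $M_p$, so no $r_n$ is an atom, and the inclusion $\mathcal A(M_p) \subseteq \{r_n : n \in \nn\}$ from Theorem \ref{theo:finitely generated rational monoids} yields $\mathcal A(M_p) = \emptyset$.

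The main obstacle is verifying that $M_p$ is not strongly bounded, since unlike the atomic case, the antimatter decomposability relations permit many alternative generating sets, and we must rule out all strongly bounded ones. Suppose for contradiction $R \subseteq M_p$ generates $M_p$ with $\mathsf n(R) \le N$; then the numerator palette $A := \mathsf n(R)$ is a finite subset of $\{1, \dots, N\} \cap \{k : \gcd(k,p) = 1\}$. For $n$ with $a_n > \max A$, any expression $a_n/p^{e_n} = \sum c_i\, a_i/p^{e_i}$ with $a_i \in A$ clears denominators to $a_n\, p^{L - e_n} = \sum c_i a_i\, p^{L - e_i}$ for some $L \ge \max e_i$. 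The strategy is to choose the sequence $\{a_n\}$ in the recursion so that, for each prescribed $N$, this Diophantine equation has no solution in non-negative integers $\{c_i\}$ for $n$ sufficiently large --- for instance, by taking each $a_n$ (along an appropriate subsequence) to be a prime larger than every $a \in A$ and having $p$-adic structure incompatible with the finite palette available in $R$. This careful arithmetic control on $\{a_n\}$, run alongside the recursion establishing antimatter, is the principal technical hurdle; once achieved, it guarantees that no generating set of $M_p$ can have bounded numerators, completing the proof.
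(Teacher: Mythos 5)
There is a genuine gap in the ``not strongly bounded'' half of the argument, and you correctly identify it yourself but never close it. Your construction produces generators $r_n = a_n/p^{e_n}$ with $a_n\to\infty$, but since an antimatter Puiseux monoid has \emph{no} minimal generating set, unboundedness of $\{a_n\}$ says nothing about whether some \emph{other} generating set of $M_p$ has bounded numerators. (The example right after Proposition~\ref{prop:a generating set of a BPM contains a bounded generating set} in Section~\ref{sec:Strongly Bounded PM} shows exactly this phenomenon: a strongly bounded Puiseux monoid that is also generated by a set with unbounded numerators.) Your proposed fix --- ``taking each $a_n$\,\ldots\,to be a prime larger than every $a\in A$ and having $p$-adic structure incompatible with the finite palette available in $R$'' --- is not an argument: the hypothetical bounded-numerator generating set $R$ is not given to you in advance, its elements need not lie in $\{r_n\}$, and the Diophantine obstruction you gesture at is never formulated, let alone established. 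Nor do you verify that the recursion $a_{n+2}=a_np-c_na_{n+1}$ can be run indefinitely keeping $a_{n+2}>0$, coprime to $p$, \emph{and} unbounded. As written, this is a plan for a proof, not a proof.

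For comparison, the paper sidesteps the Diophantine difficulty entirely: it proves the monoids fail to be \emph{bounded} (which is stronger, since strongly bounded implies bounded) and then invokes Proposition~\ref{prop:a generating set of a BPM contains a bounded generating set}, which says a bounded Puiseux monoid must admit a bounded generating subset \emph{inside any given generating set}. The paper's generators are the $T$-elements $1/2^n$ together with, for each prime $p$ in an infinite set $P$, the elements $(p^2+1)/p + 1/2^n$, which are bounded below by $p$. A bounded generating subset of these would have to omit all elements tied to primes $p$ above the bound, and then cannot produce $(q^2+1)/q + 1/2 = (2q^2+q+2)/(2q)$ for large $q\in P$ (since $q\nmid 2q^2+q+2$), a contradiction. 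The antimatter check is the same telescoping idea you use. The essential difference is that the paper converts ``rule out all bounded generating sets'' into ``rule out bounded subsets of one fixed generating set,'' a reduction your single-prime construction does not have available; if you want to salvage the one-prime-per-monoid approach, you would need an analogue of Proposition~\ref{prop:a generating set of a BPM contains a bounded generating set} for strong boundedness, and that proposition is shown to be false in the paper.
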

	
	\begin{proof}
		Since every strongly bounded Puiseux monoid is also bounded, it is enough to find a family failing to be bounded. Let $p$ be an odd prime, and consider the sets
		\[
			S_p = \bigg\{ \frac{p^2 + 1}{p} + \frac{1}{2^n} \ \bigg{|} \ n \in \nn \bigg\} \ \text{ and } \ T = \bigg\{ \frac{1}{2^n} \ \bigg{|} \ n \in \nn \bigg\}.
		\]
		Let $\mathcal{P}$ be the collection of all infinite subsets of odd prime numbers. Let $P \in \mathcal{P}$ and take $M_{P}$ to be the Puiseux monoid generated by the set
		\[
			X = T \cup \big( \bigcup_{p \in P} S_p \big).
		\]
		We claim that $M_P$ is antimatter but not bounded. Let us verify first that $M_P$ is antimatter. Notice that for every $p \in P$ and $n \in \nn$,
		\[
			\frac{p^2 + 1}{p} + \frac{1}{2^n} =  \bigg( \frac{p^2 + 1}{p} + \frac{1}{2^{n+1}} \bigg) + \frac{1}{2^{n+1}},
		\]
		which means that $S_p \subseteq X + X$. Additionally, $1/2^n = 2(1/2^{n+1})$ and so $T \subseteq X + X$. Therefore $X \subseteq X + X$, which implies that $X$ does not contain any atoms of $M_P$. Since $M_P = \langle X \rangle$, it follows immediately that $M_P$ is antimatter.
		
		Now we show that $M_P$ is not bounded. Suppose, by way of contradiction, that $M_P$ is bounded. By Proposition \ref{prop:a generating set of a BPM contains a bounded generating set}, the set $X$ must contain a bounded subset $Y$ generating $M_P$. Observe that, for every prime $p \in P$, the set $S_p$ is bounded from below by $p$. Therefore there exists a natural $N$ such that $Y \cap S_p$ is empty for all prime $p > N$. If $q \in P$ is a prime greater than $N$, then $q$ does not divide $2q^2 + q + 2$ and so
		\[
			\frac{q^2 + 1}{q} + \frac{1}{2} = \frac{2q^2 + q + 2}{2q} \in S_q \setminus \langle Y \rangle,
		\]
		which contradicts the fact that $Y$ generates $M_P$. Hence, for each $P \in \mathcal{P}$, the Puiseux monoid $M_P$ is not bounded. Since $M_P \neq M_{P'}$ when $P$ and $P'$ are distinct elements of $\mathcal{P}$, one gets that $\{M_P \mid P \in \mathcal{P}\}$ is an infinite family of antimatter Puiseux monoids that are not bounded.
	\end{proof}
	
	As shown in the above example, not every generating set of a strongly bounded Puiseux monoid $M$ can be reduced to a strongly bounded subset generating $M$. However, if $M$ is not only strongly bounded but also atomic, then any set of generators of $M$ can certainly be reduced to a strongly bounded generating set. We record this observation in Proposition \ref{prop:a generating set of an atomic SBPM contains a SB generating set}, whose proof follows straightforwardly from the fact that the set of atoms of a reduced monoid must be contained in every generating set.
	
	\begin{prop} \label{prop:a generating set of an atomic SBPM contains a SB generating set}
		Let $M$ be a strongly bounded Puiseux monoid. If $M$ is atomic, then every generating set of $M$ contains a strongly bounded generating subset. \\
	\end{prop}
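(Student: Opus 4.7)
The plan is to show that $\mathcal{A}(M)$ itself is the strongly bounded generating subset we are looking for, and that it is contained in every generating set of $M$.

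The key observation I would establish first is that in any reduced monoid (and Puiseux monoids are reduced, as noted right before Theorem~\ref{theo:finitely generated rational monoids}), every atom belongs to every generating set. Concretely, if $R$ generates $M$ and $a \in \mathcal{A}(M)$, I would write $a = r_1 + \dots + r_k$ with $r_i \in R \subseteq M$. Since $M$ is reduced, each $r_i$ is a nonzero (hence non-unit) element of $M$, so the atomicity of $a$ forces $k = 1$. Therefore $a = r_1 \in R$, giving $\mathcal{A}(M) \subseteq R$. This is the only nontrivial step, and there is no real obstacle here beyond invoking that Puiseux monoids are reduced.

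Applying this observation to any strongly bounded generating set $B$ of $M$ (which exists by the hypothesis that $M$ is strongly bounded), I conclude $\mathcal{A}(M) \subseteq B$, whence $\mathsf{n}(\mathcal{A}(M)) \subseteq \mathsf{n}(B)$ is bounded. Thus $\mathcal{A}(M)$ is itself strongly bounded.

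Finally, given an arbitrary generating set $R$ of $M$, the same containment argument yields $\mathcal{A}(M) \subseteq R$. Since $M$ is atomic we have $M = \langle \mathcal{A}(M) \rangle$, and since $\mathcal{A}(M)$ is strongly bounded by the previous paragraph, $\mathcal{A}(M)$ is a strongly bounded generating subset of $R$, which proves the proposition.
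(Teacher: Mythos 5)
Your proof is correct and matches the paper's intended argument: the paper states that the result ``follows straightforwardly from the fact that the set of atoms of a reduced monoid must be contained in every generating set,'' which is precisely the observation you establish and then apply. The only cosmetic point is that when you write $a = r_1 + \dots + r_k$ with $r_i \in R$ you should first discard any zero summands before invoking atomicity, but this is a trivial adjustment.
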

	
	\section{Atomic Structure of Strongly Bounded Puiseux Monoids} \label{sec:Atomic Structure of Strongly Bounded Puiseux Monoids}
	
	In this section, we restrict attention to the atomic structure of the subfamily of Puiseux monoids that happen to be strongly bounded. First, we find a condition under which members of this subfamily are antimatter; presenting Theorem \ref{theo:sufficiency for anatomicity} as the first main result. Then we move our focus to the classification of the atomic subfamily of strongly bounded Puiseux monoids over a finite set of primes $P$, which is stated in our second main result, Theorem \ref{theo: bounded P-RM are NS or non-atomic}.
	
	Let us introduce some definitions. We say that a sequence $\{a_n\}$ of integers \emph{stabilizes} at a positive integer $d$ if there exists $N \in \nn$ such that $d$ divides $a_n$ for every $n \ge N$. The \emph{spectrum} of a sequence $\{a_n\}$, denoted by $\text{Spec}(\{a_n\})$, is the set of primes $p$ for which $\{a_n\}$ stabilizes at $p$.
	
	\begin{lemma} \label{lem:empty spectrum}
		Let $\{a_n\}$ be a sequence of positive integers having an upper bound $B$. If the spectrum of $\{a_n\}$ is empty, then for each $N \in \nn$ there exist $k \le B+1$ and $n_1, \dots, n_k \in \nn$ such that $N < n_1< \dots < n_k$ and $\gcd(a_{n_1}, \dots, a_{n_k}) = 1$.
	\end{lemma}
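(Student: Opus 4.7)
The plan is to construct the indices $n_1 < n_2 < \cdots < n_k$ greedily, tracking the running gcd $g_j := \gcd(a_{n_1}, \dots, a_{n_j})$ and arranging that $g_j$ drops at every step until it reaches $1$. First I would unpack the hypothesis $\text{Spec}(\{a_n\}) = \emptyset$ into its contrapositive form usable inside an inductive construction: for every prime $p$ and every $M \in \nn$, there is an index $n > M$ with $p \nmid a_n$ (otherwise $\{a_n\}$ would stabilize at $p$, putting $p$ in the spectrum).

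With that in hand, I would pick $n_1 > N$ arbitrarily and set $g_1 := a_{n_1}$, so $g_1 \le B$. At stage $j$, assume $g_j > 1$ and select any prime divisor $p$ of $g_j$. Applying the contrapositive hypothesis to $p$ and to $M := n_j$, I can pick $n_{j+1} > n_j$ with $p \nmid a_{n_{j+1}}$, and set $g_{j+1} := \gcd(g_j,\, a_{n_{j+1}})$. Since $p \mid g_j$ but $p \nmid a_{n_{j+1}}$, the integer $g_{j+1}$ is a proper divisor of $g_j$, and in particular $g_{j+1} < g_j$.

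The resulting sequence $g_1 > g_2 > \cdots$ is a strictly decreasing sequence of positive integers bounded above by $g_1 \le B$, hence it must terminate at $g_k = 1$ after at most $g_1 \le B$ steps. The indices $n_1 < \cdots < n_k$ produced up to that stage all exceed $N$ and satisfy $\gcd(a_{n_1}, \dots, a_{n_k}) = g_k = 1$, with $k \le B \le B+1$, as required.

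The argument is essentially a gcd-descent, so the only place demanding care is the correct unpacking of the empty-spectrum hypothesis (one needs the "for every cutoff $M$" form, not just a single witness), and this falls out immediately from negating the definition of stabilization. I do not foresee any substantive obstacle beyond that.
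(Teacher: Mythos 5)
Your proof is correct, and it takes a genuinely different route from the paper's. The paper first observes that boundedness forces the set $P$ of primes dividing any term of $\{a_n\}$ to be finite (all $\le B$), enumerates $P = \{p_1, \dots, p_k\}$, and then uses the empty-spectrum hypothesis once per prime to pick increasing indices $n_1 < \dots < n_k$ with $p_i \nmid a_{n_i}$; the gcd is $1$ because every potentially offending prime is ``killed'' by some chosen term. Your argument instead runs a gcd-descent: it tracks the running gcd $g_j$ and uses the empty-spectrum hypothesis adaptively, once per step, to strictly decrease $g_j$ until it hits $1$. This avoids the preliminary observation about finitely many primes, bounds $k$ directly by $g_1 \le B$ (a strictly decreasing chain of positive integers from $g_1$ to $1$ has length $\le g_1$), and in fact yields the sharper bound $k \le \lfloor \log_2 B \rfloor + 1$ since each $g_{j+1}$ is a proper divisor of $g_j$. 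The paper's version is arguably more transparent about which primes are being eliminated; yours is shorter and gives a better quantitative bound, though both comfortably satisfy $k \le B+1$.
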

	
	\begin{proof}
		Fix $N \in \nn$. Since the sequence $\{a_n\}$ is bounded, there are only finitely many primes dividing at least one of the terms of $\{a_n\}$. Let $P$ be the set comprising such primes. If $P$ is empty, then $a_n = 1$ for all $n \in \nn$, and we can take $k=2$ and both $n_1$ and $n_2$ to be two distinct integers greater than $N$ such that $N < n_1 < n_2$. In such a case, $k = 2 \le B+1$ for every upper bound $B$ of $\{a_n\}$, and one has $N < n_1 < n_2$ and $\gcd(a_{n_1}, a_{n_2}) = \gcd(1,1) = 1$. Assume, therefore, that $\{a_n\}$ is not the constant sequence whose terms are all ones. Thus, $P$ is not empty; let $P = \{p_1, \dots, p_k\}$. The fact that $2 \le p_n \le B$ when $n \in \{1, \dots, k\}$ implies $k \le B+1$. As the spectrum of $\{a_n\}$ is empty, there exists $n_1 > N$ such that $p_1$ does not divide $a_{n_1}$. Similarly, there exists $n_2 > n_1$ for which $p_2$ does not divide $a_{n_2}$. In general, if for $i < k$ one has chosen $n_1, \dots, n_i \in \nn$ so that $N < n_1 < \dots < n_i$ and $p_i \nmid a_{n_i}$, then there exists $n_{i+1} \in \nn$ satisfying $n_{i+1} > n_i$ and $p_{i+1} \nmid a_{n_{i+1}}$. After following the described procedure finitely many times, we will obtain $n_1, \dots, n_k \in \nn$ such that $N < n_1 < \dots < n_k$ and $p_i \nmid a_{n_i}$. Now it follows immediately that $\gcd(a_{n_1}, \dots, a_{n_k}) = 1$.
	\end{proof}
	
	The next theorem gives a sufficient condition for a Puiseux monoid to be antimatter.
	
	\begin{theorem} \label{theo:sufficiency for anatomicity}
		Let $\{ r_n \mid n \in \nn \}$ be a strongly bounded subset of rationals generating $M$. If $\mathsf{d}(r_n)$ divides $\mathsf{d}(r_{n+1})$, the sequence $\{\mathsf{d}(r_n)\}$ is unbounded, and the spectrum of $\{\mathsf{n}(r_n)\}$ is empty, then $M$ is antimatter.
	\end{theorem}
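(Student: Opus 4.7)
The plan is to show that no generator $r_N$ can be an atom of $M$; since $M$ is reduced, $\mathcal{A}(M)$ must lie inside every generating set, so this forces $\mathcal{A}(M) = \emptyset$. Let $B \in \nn$ be an upper bound for $\mathsf{n}(r_n)$. My strategy is to reduce the problem of writing $r_N$ as a non-trivial sum of other generators to a numerical-semigroup membership question by clearing denominators, and then to invoke the Frobenius bound of Theorem~\ref{theo:Frobenius upper bound}.

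Fix $N$. First I would exploit the unboundedness of $\{\mathsf{d}(r_n)\}$ to choose an index $n_0 > N$ with $\mathsf{d}(r_{n_0}) > B^{2}\,\mathsf{d}(r_N)/\mathsf{n}(r_N)$. The empty-spectrum hypothesis then lets me apply Lemma~\ref{lem:empty spectrum} (with lower-bound parameter $n_0$) to obtain $k \le B+1$ indices $n_0 < n_1 < \dots < n_k$ satisfying $\gcd(\mathsf{n}(r_{n_1}), \dots, \mathsf{n}(r_{n_k})) = 1$. Set $d := \mathsf{d}(r_{n_k})$; the divisibility-chain hypothesis ensures that $a_i := d \cdot r_{n_i}$ (for $i = 1, \dots, k$) and $a_0 := d \cdot r_N$ are all positive integers. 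A routine $p$-adic valuation check then yields $\gcd(a_1, \dots, a_k) = 1$: for primes $p \mid d$ the identity $a_k = \mathsf{n}(r_{n_k})$ (coprime to $d$) forces $p \nmid a_k$, whereas for primes $p \nmid d$ one has $\pval(a_i) = \pval(\mathsf{n}(r_{n_i}))$, and the minimum over $i$ of these valuations is $0$ by Lemma~\ref{lem:empty spectrum}.

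Hence $\langle a_1, \dots, a_k \rangle$ is a numerical semigroup, say with Frobenius number $F$. Since its smallest minimal generator satisfies $\min_i a_i \le a_k = \mathsf{n}(r_{n_k}) \le B$, and $\max_i a_i \le B\,d/\mathsf{d}(r_{n_1}) \le B\,d/\mathsf{d}(r_{n_0})$, Theorem~\ref{theo:Frobenius upper bound} gives $F < B^{2}\,d/\mathsf{d}(r_{n_0})$. The choice of $n_0$ then ensures $a_0 = \mathsf{n}(r_N)\,d/\mathsf{d}(r_N) > F$, so $a_0 = \sum c_i a_i$ for some $c_i \in \nn_0$; dividing by $d$ yields $r_N = \sum c_i r_{n_i}$. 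Since the $r_n$ are distinct and $r_N \neq 0$, necessarily $\sum c_i \ge 2$, which exhibits $r_N$ as a sum of two positive elements of $M$; hence $r_N$ is not an atom. The main obstacle is the quantitative tension between $a_0$ and $F$, both of which scale linearly in $d$; the argument succeeds only because $a_k = \mathsf{n}(r_{n_k}) \le B$ pins $\min_i a_i$ to a constant independent of $d$, which is precisely why the common denominator should be aligned with $\mathsf{d}(r_{n_k})$.
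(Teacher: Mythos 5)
Your argument is correct, and it rests on exactly the same two pillars as the paper's proof: Lemma~\ref{lem:empty spectrum} to manufacture coprime numerators far out in the sequence, and Theorem~\ref{theo:Frobenius upper bound} applied to the scaled numerical semigroup $\langle a_1,\dots,a_k\rangle$ (which, after dividing through by $d=\mathsf{d}(r_{n_k})$, is precisely the semigroup $\langle b_{n_k}b_{n_i}^{-1}a_{n_i}\rangle$ used in the paper). The only genuine divergence is the target: you push $a_0 = d\,r_N$ above the Frobenius number to decompose each generator $r_N$ directly as a sum of at least two later generators, while the paper instead pushes $b_{n_k}/b_N$ above the Frobenius number to conclude $1/\mathsf{d}(r_N)\in M$ for every $N$, and then observes that each $1/\mathsf{d}(r_n)$ and each $r_n$ is a proper multiple of a smaller element. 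Your route is a little leaner and avoids the final two-step ``none of the $1/b_n$ is an atom, hence no generator is an atom'' chain; the paper's route is marginally longer but yields Corollary~\ref{cor:empty spectrum strongly bdd PM fully spans} (the full description $M = \{m/b_n \mid m\in\nn_0,\ n\in\nn\}$) for free, which your version does not immediately give. Two small bookkeeping points worth noting: your appeal to ``the $r_n$ are distinct'' to rule out $\sum c_i = 1$ is not actually needed, since your choice $\mathsf{d}(r_{n_0}) > B^2\mathsf{d}(r_N)/\mathsf{n}(r_N) \ge \mathsf{d}(r_N)$ already forces $\mathsf{d}(r_{n_i}) > \mathsf{d}(r_N)$ for all $i$, hence $r_{n_i}\neq r_N$; and the inequality chain $F < (a_k-1)(\max_i a_i - 1) < B\cdot Bd/\mathsf{d}(r_{n_1})$ should be stated via $\min_i a_i \le a_k$ rather than asserting $a_k$ is the minimum, exactly as the paper handles it.
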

	
	\begin{proof}
		For every $n \in \nn$, let us denote $\mathsf{n}(r_n)$ and $\mathsf{d}(r_n)$ by $a_n$ and $b_n$, respectively. Let $B$ be an upper bound for the sequence $\{a_n\}$. Fix an arbitrary positive integer $N$. We will show that $b_N^{-1}$ is contained in $M$. By Lemma \ref{lem:empty spectrum}, there exist $k \in \nn$ and $n_1, \dots, n_k \in \nn$ such that $n_1 < \dots < n_k$ and $\gcd(a_{n_1}, \dots, a_{n_k}) = 1$, where $n_1$ is large enough to satisfy $B^2 < b_{n_1}b_N^{-1}$ (we are using here the unboundedness of $\{b_n\}$). On the other hand, $\gcd(a_{n_k}, b_{n_k}b_{n_i}^{-1}) = 1$; this makes sense because $b_{n_i}$ divides $b_{n_k}$ for $i = 1, \dots, k$. Since $\gcd(a_{n_1}, \dots, a_{n_k}) = 1$ and $\gcd(a_{n_k}, b_{n_k}b_{n_i}^{-1}) = 1$ for every $i \le k$, one has $\gcd(b_{n_k}b_{n_1}^{-1}a_{n_1}, \dots, b_{n_k}b_{n_{k-1}}^{-1}a_{n_{k-1}}, a_{n_k}) = 1$. Let $F(S)$ be the Frobenius number of the numerical semigroup $S = \langle b_{n_k}b_{n_1}^{-1}a_{n_1}, \dots, b_{n_k}b_{n_{k-1}}^{-1}a_{n_{k-1}}, a_{n_k} \rangle$. Taking $j \in \{1, \dots, k\}$ so that $b_{n_k}b_{n_j}^{-1}a_{n_j} = \max\{b_{n_k}b_{n_i}^{-1}a_{n_i} \mid 1 \le i \le k\}$ and using Theorem \ref{theo:Frobenius upper bound}, one can see that
		\begin{align*}
			F(S) < (a_{n_k} - 1)(b_{n_k}b_{n_j}^{-1}a_{n_j} - 1) < a_{n_k}a_{n_j}b_{n_k}b_{n_j}^{-1} \le B^2b_{n_k}b_{n_1}^{-1} < b_{n_k}b_N^{-1},
		\end{align*}
		where the last inequality follows from $B^2 < b_{n_1}b_N^{-1}$. As $b_{n_k}b_N^{-1} > F(S)$, there exist $c_1, \dots, c_k \in \nn_0$ such that
		\[
			b_{n_k}b_N^{-1} = \sum_{j=1}^k c_j b_{n_k}b_{n_j}^{-1}a_{n_j}
		\]
		and, accordingly,
		\[
			\frac{1}{b_N} = \sum_{j=1}^k c_j \frac{a_{n_j}}{b_{n_j}} \in M.
		\]
		Therefore $1/b_n \in M$ for every $n \in \nn$. Since $1/b_n = (b_{n+1}/b_n)1/b_{n+1}$ for every $n \in \nn$, none of the elements $1/b_n$ is an atom of $M$. Moreover, each generator $a_n/b_n$ can be written as the sum of $a_n$ copies of $1/b_n$; hence $a_n/b_n \notin \mathcal{A}(M)$ for all $n \in \nn$. Having checked that none of the generators of $M$ is an atom, we can conclude that $M$ is antimatter.
	\end{proof}
	
	There is some additional information about $M$ in the proof of Theorem \ref{theo:sufficiency for anatomicity}. We list it in the following corollary.
	
	\begin{cor} \label{cor:empty spectrum strongly bdd PM fully spans}
		If $M$ is a Puiseux monoid satisfying the conditions in Theorem \ref{theo:sufficiency for anatomicity}, then
		\[
			M = \bigg\{\frac{m}{b_n} \ \bigg| \ m \in \nn_0 \text{ and } n \in \nn \bigg\}.\\
		\]
	\end{cor}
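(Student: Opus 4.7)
The proof of Corollary \ref{cor:empty spectrum strongly bdd PM fully spans} should come essentially for free from the internals of the proof of Theorem \ref{theo:sufficiency for anatomicity}, so the plan is simply to extract and combine two observations.

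The plan is to prove the two inclusions separately. For the inclusion $\supseteq$, I would recall that in the proof of Theorem \ref{theo:sufficiency for anatomicity} we already established that $1/b_n \in M$ for every $n \in \nn$. Because $M$ is closed under addition and contains $0$, it immediately follows that $m/b_n = m \cdot (1/b_n) \in M$ for all $m \in \nn_0$ and $n \in \nn$. This gives one direction without any extra work.

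For the inclusion $\subseteq$, I would take an arbitrary $x \in M$ and use the fact that $M = \langle r_n \mid n \in \nn\rangle$ to write $x = \sum_{i=1}^{k} c_i r_{n_i} = \sum_{i=1}^{k} c_i \, a_{n_i}/b_{n_i}$ for some $k \in \nn$, indices $n_1 < \dots < n_k$, and coefficients $c_i \in \nn_0$. Setting $N = n_k$, the hypothesis $b_n \mid b_{n+1}$ yields $b_{n_i} \mid b_N$ for each $i$, so each quotient $b_N/b_{n_i}$ is a positive integer and
\[
    b_N\, x \;=\; \sum_{i=1}^{k} c_i\, a_{n_i}\, (b_N/b_{n_i}) \;\in\; \nn_0.
\]
Denoting this non-negative integer by $m$ gives $x = m/b_N$, which places $x$ in the right-hand set.

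There is no real obstacle here; the only thing to double-check is that the divisibility chain $b_n \mid b_{n+1}$ is applied in the correct direction so that $b_N/b_{n_i}$ is an integer, which it is as long as we pick $N$ to be the maximum (not the minimum) of the indices occurring in the representation of $x$. Combining the two inclusions yields the claimed set-theoretic equality.
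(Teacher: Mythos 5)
Your proposal is correct and follows exactly the route the paper intends: the inclusion $\supseteq$ is the fact $1/b_n \in M$ established inside the proof of Theorem~\ref{theo:sufficiency for anatomicity}, and the inclusion $\subseteq$ is the short observation that the divisibility chain $b_n \mid b_{n+1}$ forces $b_{n_k}x \in \nn_0$ whenever $x$ is expressed as a finite $\nn_0$-combination of the generators with largest occurring index $n_k$. No gap; both directions are handled cleanly.
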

	
	Let us verify that Theorem \ref{theo:sufficiency for anatomicity} is sharp, meaning that none of its hypotheses are redundant. First, if one drops the condition of $M$ being strongly bounded, then it might not be antimatter; see Proposition \ref{prop:PM atomic not strongly bounded}. Further, Example~ \ref{ex:atomic example with infinitely many atoms} illustrates that the condition $\mathsf{d}(r_n) \mid \mathsf{d}(r_{n+1})$ for every $n \in \nn$ is also required. Numerical semigroups are evidence that the sequence $\{\mathsf{d}(r_n)\}$ has to be necessarily unbounded. Finally, the family of strongly bounded Puiseux monoids constructed in Proposition \ref{prop:nontrivial family with finitely many atoms} shows that the emptiness of the spectrum of $\{\mathsf{n}(r_n)\}$ also needs to be imposed to guarantee $M$ is antimatter.

	\begin{prop} \label{prop:nontrivial family with finitely many atoms}
		For each $m \in \nn$ there exists a non-finitely generated strongly bounded Puiseux monoid having exactly $m$ atoms.
	\end{prop}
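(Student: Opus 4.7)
The plan is to produce, for each $m \in \nn$, the Puiseux monoid
\[
    M_m = \langle 1/p_1, \dots, 1/p_m, 1/2^n \mid n \in \nn \rangle,
\]
where $p_1 < \dots < p_m$ are distinct odd primes. Every generator has numerator $1$, so $M_m$ is patently strongly bounded, and the construction mirrors Example~\ref{ex:nonfinite generated with infinitely many atoms}, but truncates the odd-prime contribution to exactly $m$ primes.

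I would first dispatch the easy items. Since $M_m$ contains $1/2^n$ for every $n \in \nn$, the denominator set $\mathsf{d}(M_m^\bullet)$ is unbounded. On the other hand, any finitely generated Puiseux monoid is, by Proposition~\ref{prop:finitely generated positive rational monoids}, isomorphic to a numerical semigroup and therefore a rescaling of a subset of $\nn_0$ by a fixed positive rational, so its denominator set is bounded. Hence $M_m$ is not finitely generated. Next, the identity $1/2^n = 2 \cdot (1/2^{n+1})$ shows that no power-of-two reciprocal is an atom. Because $M_m$ is reduced, $\mathcal{A}(M_m)$ is contained in every generating set, so it remains to establish that $\mathcal{A}(M_m) = \{1/p_1, \dots, 1/p_m\}$; in particular this set has $m$ distinct elements since the $p_i$ are distinct.

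The crux is verifying that each $1/p_i$ is an atom, and this is where I expect the only real work. Suppose
\[
    \frac{1}{p_i} = \sum_{j=1}^m \frac{c_j}{p_j} + \sum_{n=1}^N \frac{d_n}{2^n}
\]
for some $c_j, d_n \in \nn_0$ and $N \in \nn$. Clearing denominators by $L = 2^N \prod_{j=1}^m p_j$ yields an integer identity, and reducing modulo $p_i$ annihilates every summand on the right except the one carrying $c_i$, since $p_i$ divides $L/p_j$ for $j \neq i$ and divides $L/2^n$ for each $n \le N$. Because $L/p_i$ is coprime to $p_i$, the reduction forces $c_i \equiv 1 \pmod{p_i}$ and in particular $c_i \ge 1$. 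Since the displayed right-hand side is a sum of non-negative rationals totaling $1/p_i$, the bound $c_i/p_i \ge 1/p_i$ collapses everything, giving $c_i = 1$ and all remaining coefficients zero. Thus the only way to express $1/p_i$ using generators of $M_m$ is the trivial one, so $1/p_i \in \mathcal{A}(M_m)$. The main obstacle is this coprimality/minimality bookkeeping, but it is essentially the same $p$-adic argument already carried out in Example~\ref{ex:atomic example with infinitely many atoms}, and the truncation to the finite family $\{p_1, \dots, p_m\}$ introduces no new difficulty.
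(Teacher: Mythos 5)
Your proof is correct: each $1/p_i$ is indeed an atom by the mod-$p_i$ argument, no reciprocal power of $2$ can be an atom, and $M_m$ is strongly bounded and not finitely generated. The construction, however, is genuinely different from the paper's. The paper builds a strongly bounded Puiseux monoid over a \emph{single} prime: it fixes distinct primes $p \ne q$ with $q > m$ and sets
\[
  M = \Big\langle\, m,\, m+1,\, \dots,\, 2m-1,\ \tfrac{q}{p^{m+1}},\ \tfrac{q}{p^{m+2}},\ \dots \Big\rangle,
\]
whose $m$ atoms are the consecutive integers $m,\dots,2m-1$, while the geometric tail $q/p^n$ carries no atoms; atomicity of the integer block is extracted from the $q$-adic valuation of the right-hand side of $a - a' = \sum_n c_n q/p^{m+n}$. (For that step to close cleanly one actually wants $q > 2m-1$ rather than merely $q > m$, so that $q \mid a - a'$ forces $a = a'$; this is a harmless choice.) Your $M_m$ instead lives over $m+1$ primes $\{2, p_1, \dots, p_m\}$ and identifies each atom as a single prime reciprocal, which makes the atom count more transparent, and the $p_i$-adic bookkeeping is the same kind of computation as in Example~\ref{ex:atomic example with infinitely many atoms}. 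What the paper's choice buys that yours does not: the generator sequence of $M$ can be enumerated so that $\mathsf{d}(r_n) \mid \mathsf{d}(r_{n+1})$ and $\{\mathsf{d}(r_n)\}$ is unbounded while the numerator spectrum is nonempty (it stabilizes at $q$), which is exactly the counterexample the paper needs immediately afterward to show the spectrum hypothesis in Theorem~\ref{theo:sufficiency for anatomicity} cannot be dropped; your denominators $p_1, \dots, p_m, 2, 4, 8, \dots$ are not nested, so $M_m$ would not serve that secondary purpose. For the proposition as stated, though, both constructions are complete and correct.
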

	
	\begin{proof}
		Take $p$ and $q$ to be prime numbers satisfying $p \neq q$ and $q > m$. Consider the Puiseux monoid over $p$
		\[
			M = \bigg\langle m, \dots, 2m-1, \frac{q}{p^{m+1}}, \frac{q}{p^{m+2}}, \dots \bigg\rangle.
		\]
		We check that $\mathcal{A}(M) = \{m, \dots, 2m-1\}$. Suppose $a \in \zz$ such that $m \le a \le 2m-1$. Since $a \in M$, it can be written as
		\[
			a = a' + \sum_{n \ge 1} c_n \frac{q}{p^{m+n}},
		\]
		for $a' \in \{0\} \cup \{m, \dots, 2m-1\}$ and for a suitable set of non-negative coefficients $c_n$, all but finitely many of them being zero. Then
		\begin{equation}
			\frac{(a - a')p^m}{q} = \sum_{n \ge 1} \frac{c_n}{p^n}. \label{eq:example with finitely many atoms}
		\end{equation}
		Because the $q$-adic valuations of the right-hand side of \eqref{eq:example with finitely many atoms} are at least zero,  the left-hand side of this equation must be an integer. As a result, $a - a' = 0$ and so $c_n = 0$ for every $n \in \nn$. This implies that $a \in \mathcal{A}(M)$. Thus, $\{m, \dots, 2m-1\} \subseteq \mathcal{A}(M)$. On the other hand, no generator of the form $q/p^n$ for $n > m$ can be an atom of $M$, for $q/p^n$ is the sum of $p$ copies of $q/p^{n+1}$. Hence $\mathcal{A}(M) = \{m, \dots, 2m-1\}$.
	\end{proof}
	
	Proposition \ref{prop:nontrivial family with finitely many atoms} tells us that there are infinitely many Puiseux monoids (by varying our choice of $p$) with any fixed finite number of atoms that are not finitely generated and, therefore, non-atomic. This fact, along with Example \ref{ex:nonfinite generated with finite number of atoms trivial} and Example \ref{ex:nonfinite generated with infinitely many atoms}, gives evidence of the complexity of the atomic structure of Puiseux monoids. \\
	
	We conclude our discussion about the atomic structure of Puiseux monoids with a classification of the atomic subfamily of strongly bounded Puiseux monoids over a finite set of primes. First, let us introduce some terminology. The \emph{spectrum} of a natural $n$, which is denoted by $\text{Spec}(n)$, is the set of all prime divisors of $n$. In addition, given a finite set of primes $P = \{p_1, \dots, p_k\}$ with $p_1 < \dots < p_k$, the \emph{support} of $n \in \nn$ with respect to $P$, denoted by $\text{Supp}_P(n)$, is the set of indices $i$ such that $p_i \mid n$. The next lemmas will be used in the proof of Theorem~\ref{theo: bounded P-RM are NS or non-atomic}.
	
	\begin{lemma} \label{lem:simultaneously increasing subsequence implies antimatter}
		Let $P$ be a finite set of primes, and let $M$ be a Puiseux monoid over $P$. If there is a sequence $\{b_n\} \subseteq \nn$ such that $1/b_n \in M$ for every $n \in \nn$ and $\{\pval(b_n)\}$ is strictly increasing for each $p \in P$, then $M$ is antimatter.
	\end{lemma}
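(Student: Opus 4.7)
The plan is to prove $\mathcal{A}(M) = \emptyset$ directly: I fix an arbitrary $m \in M^\bullet$ and exhibit a decomposition $m = x + y$ with $x, y \in M^\bullet$, which forces $m \notin \mathcal{A}(M)$. Write $P = \{p_1, \dots, p_k\}$. Because $M$ is a Puiseux monoid over $P$, we have $\pval(m) \ge 0$ for every prime $p \notin P$, so the reduced denominator of $m$ is supported in $P$; write $\mathsf{d}(m) = \prod_{i=1}^k p_i^{e_i}$ with $e_i \in \nn_0$, equivalently ${\pval}_i(m) \ge -e_i$ for each $i \in \{1,\dots,k\}$.

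The key step is to exploit the strict monotonicity of $\{{\pval}_i(b_n)\}$ for each $p_i \in P$. For each $i$, since $\{{\pval}_i(b_n)\}$ is a strictly increasing sequence of integers, there exists $N_i \in \nn$ such that ${\pval}_i(b_n) \ge e_i$ whenever $n \ge N_i$. Choosing $n \ge \max\{N_1, \dots, N_k\}$ simultaneously yields ${\pval}_i(b_n) \ge e_i$ for every $i$, which forces $\mathsf{d}(m) \mid b_n$ and hence makes $c := m \cdot b_n$ a positive integer. Moreover, the strict increase of the $p_i$-adic valuations forces $b_n \to \infty$ (for instance, $b_n \ge p_1^{{\pval}_1(b_n)}$), so after further enlarging $n$ I can also ensure $c \ge 2$.

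With such an $n$ fixed, the identity
\[
    m = (c - 1)\cdot \frac{1}{b_n} + \frac{1}{b_n}
\]
exhibits $m$ as a sum of two strictly positive elements of $M$, since each summand is a positive-integer multiple of $1/b_n \in M$. Hence $m \notin \mathcal{A}(M)$, and as $m$ was arbitrary, $M$ is antimatter. The only subtlety is balancing the two requirements on $n$ at once (divisibility of $\mathsf{d}(m)$ into $b_n$ and $c \ge 2$), but both follow immediately from the unboundedness built into the hypothesis; the rest of the argument is just bookkeeping.
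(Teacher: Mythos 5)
Your proof is correct and takes essentially the same route as the paper: pick $n$ large enough (using the strict increase of each $\pval(b_n)$ and the finiteness of $P$) so that $\mathsf{d}(m)$ divides $b_n$, making $m$ an integer multiple $c \ge 2$ of $1/b_n \in M$, and then decompose. The paper phrases the divisibility step in terms of the $p$-adic valuations of $q$ and $1/b_n$ rather than via $\mathsf{d}(m) \mid b_n$, but the underlying argument is identical.
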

	
	\begin{proof}
		Fix a sequence $\{b_n\}$ of positive integers such that $1/b_n \in M$ and $\{{\pval}(b_n)\}$ is strictly increasing for each $p \in P$. Take an element $q$ in $M^\bullet$, and let $a,b \in \nn$ such that $q = a/b$ and $\gcd(a,b) = 1$. Set
		\[
			m_q = \min_{p \in P}\{\pval(q)\}.
		\]
		Since $\{{\pval(b_n)}\}$ is strictly increasing for each $p \in P$ and the set $P$ is finite, there exists $N \in \nn$ such that $-\pval(b_n) < m_q$ for every $p \in P$ and $n \in \zz_{> N}$. Therefore we obtain
		\begin{equation} \label{eq:sum of copies of elements in the increasing-exponent sequence}
			q = \bigg(d\prod_{p \in P}p^{\pval(q) - \pval(1/b_n)}\bigg)\bigg(\prod_{p \in P}p^{\pval(1/b_n)}\bigg) 
			= \bigg(d\prod_{p \in P}p^{\pval(q) - \pval(1/b_n)}\bigg) \frac{1}{b_n}, 
		\end{equation}
		where $d$ is the greatest natural number dividing $a$ such that $p \nmid d$ for every $p \in P$. Because $\pval(1/b_n) = - \pval(b_n) < m_q \le \pval(q)$ for each $p \in P$, it follows that the exponents $\pval(q) - \pval(1/b_n)$ in \eqref{eq:sum of copies of elements in the increasing-exponent sequence} are all positive. Thus, $q$ is the sum of more than one copy of $1/b_n$, whence we find that $q$ is not an atom. Since $q$ was taken arbitrarily in $M^\bullet$, we conclude that $\mathcal{A}(M)$ is empty.
	\end{proof}
	
	\begin{lemma} \label{lem:common bound for simultaneous strictly increasing valuations}
		Let $k \in \nn$ and $P = \{p_1, \dots,p_k\}$ be a finite set of primes. Let $\{s_n\}$ be a sequence so that $\emph{Spec}(s_n) \subseteq P$ for every $n \in \nn$. Then there exists $N \in \nn$ satisfying the following property: if there exist $n \in \nn$ and $I \subseteq \{1, \dots,k\}$ such that ${\pval}_i(s_n) > N$ for each $i \in I$, then there is a subsequence $\{s'_n\}$ of $\{s_n\}$ for which $\{{\pval}_i(s'_n)\}$ is strictly increasing for each $i \in I$.
	\end{lemma}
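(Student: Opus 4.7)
The plan is to classify each subset $I \subseteq \{1,\dots,k\}$ according to whether the tuple of valuations indexed by $I$ is \emph{simultaneously} unbounded along $\{s_n\}$, and then to set $N$ above a universal threshold that rules out the bad subsets. For each $I \subseteq \{1,\dots,k\}$ let $f_I(n) := \min_{i \in I} \pval_i(s_n)$, with the convention $f_\emptyset \equiv +\infty$. Call $I$ \emph{good} if $\sup_n f_I(n) = \infty$, and \emph{bad} otherwise, setting $B_I := \sup_n f_I(n) < \infty$ in the latter case.

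Take $N := 1 + \max\{B_I \mid I \subseteq \{1,\dots,k\} \text{ bad}\}$, with the convention $\max \emptyset = 0$. Suppose $n \in \nn$ and $I \subseteq \{1,\dots,k\}$ satisfy $\pval_i(s_n) > N$ for each $i \in I$. Then $f_I(n) > N$, which rules out $I$ being bad (for bad $I$ we would have $f_I(n) \le B_I < N$). Hence $I$ is good, and the task reduces to producing, for every good $I$, a subsequence of $\{s_n\}$ along which $\{\pval_i(s'_n)\}$ is strictly increasing for each $i \in I$.

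For the extraction, since $\sup_n f_I(n) = \infty$ and $f_I$ is integer-valued, pick $n_1 < n_2 < \cdots$ with $f_I(n_j) \to \infty$. Because $\pval_i(s_{n_j}) \ge f_I(n_j)$ for each $i \in I$, each individual coordinate sequence $(\pval_i(s_{n_j}))_{j \ge 1}$ also tends to $\infty$. Any integer sequence tending to $\infty$ contains a strictly increasing subsequence (take $j_1 = 1$ and recursively let $j_{t+1}$ be the least $j > j_t$ with value exceeding the previous), and passing to a further subsequence preserves any strict monotonicity already achieved. Iterating this extraction over the at most $k$ indices of $I$ produces the required subsequence $\{s'_n\}$. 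The case $I = \emptyset$ is vacuous and the claim holds for any subsequence.

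The main subtlety lies in the last step. Mere unboundedness of each individual coordinate $\pval_i$ along $\{s_n\}$ is \emph{not} enough to guarantee simultaneous strict monotonicity on $I$: if the coordinates spike on disjoint subsets of indices (e.g., $s_n = p_1^n$ when $n$ is odd and $s_n = p_2^n$ when $n$ is even), extracting a strictly increasing subsequence for one coordinate collapses the other. The correct hypothesis, as captured by the definition of good subset, is the unboundedness of the minimum $f_I$, which forces every coordinate in $I$ to grow along a common subsequence and thus enables the iterative refinement to succeed.
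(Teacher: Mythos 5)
Your proof is correct and follows essentially the same approach as the paper: classify subsets $I$ by whether the minimum of the valuations over $I$ is bounded along $\{s_n\}$, take $N$ above all the bad bounds, and handle the good case by iterated extraction of strictly increasing subsequences. The one point worth noting is that you explicitly prove the key implication --- that unboundedness of $\min_{i\in I}\pval_i(s_n)$ forces a subsequence along which every $\pval_i$, $i\in I$, is simultaneously strictly increasing --- whereas the paper asserts the contrapositive (the existence of $N_J$ for each ``bad'' $J$) without spelling out the justification.
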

	
	\begin{proof}
		Let $\mathcal{J}$ be the set of all subsets of indices $J$ of $\{1, \dots,k\}$ for which $\{s_n\}$ does not contain any subsequence $\{s'_n\}$ such that $\{{\pval}_j(s'_n)\}$ is strictly increasing for each $j \in J$. For each $J \in \mathcal{J}$ there must exist $N_J \in \nn$ satisfying that, for every $n \in \nn$, the inequality ${\pval}_j(s_n) \le N_J$ holds for at least an index $j \in J$. Take $N \in \nn$ to be $\max\{N_J \mid J \subseteq \{1, \dots,k\}\}$. Suppose now that $n$ is a natural number and $I$ is a subset of $\{1, \dots, k\}$ such that ${\pval}_i(s_n) > N$ for each $i \in I$. If $I \notin \mathcal{J}$, then we are done. Suppose, by way of contradiction, that $I \in \mathcal{J}$. Then $N \ge N_I$ and so ${\pval}_i(s_n) > N_I$ for every $i \in I$. This means that the inequality ${\pval}_i(s_n) \le N_I$ does not hold for any $i \in I$, contradicting the fact that $I \in \mathcal{J}$.
	\end{proof}
	
	\begin{lemma} \label{lem:partitioning the set of generators}
		Let $M$ be a Puiseux monoid generated by a set $S$. Suppose also that $S = S_1 \cup \dots \cup S_n$, where $n \in \nn$ and $S_i$ is a nonempty subset of $S$ for $i=1, \dots,n$. Then the next set inclusion holds:
		\begin{equation}
			\mathcal{A}(M) \subseteq \bigcup_{i=1}^n \mathcal{A}(\langle S_i \rangle). \label{eq:distribution of atoms in finitely many submonoids}
		\end{equation}
	\end{lemma}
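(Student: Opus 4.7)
The plan is to show that any atom of $M$ must already lie in one of the subsets $S_i$, and that it remains an atom of the submonoid $\langle S_i \rangle$; the result then follows immediately.

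First I would recall that $M$, being a Puiseux monoid, is reduced. As noted earlier in the paper (right before Theorem~\ref{theo:finitely generated rational monoids}), the set of atoms of a reduced monoid is contained in every generating set of that monoid. In particular $\mathcal{A}(M) \subseteq S$, so any atom $a \in \mathcal{A}(M)$ lies in $S_i$ for at least one index $i \in \{1, \dots, n\}$. Fix such an $i$; then certainly $a \in \langle S_i \rangle$ and $a$ is a non-unit of $\langle S_i \rangle$ (as $a > 0$ and $\langle S_i \rangle$ is itself reduced, being a submonoid of the reduced monoid $M$).

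Next I would argue that $a \in \mathcal{A}(\langle S_i \rangle)$ by contradiction. Suppose $a$ fails to be an atom in $\langle S_i \rangle$. Then there exist $x, y \in \langle S_i \rangle$, neither of which is a unit of $\langle S_i \rangle$, satisfying $a = x + y$. Since $\langle S_i \rangle \subseteq M$, both $x$ and $y$ belong to $M$, and because they are strictly positive rationals they are non-units of $M$ as well. But this contradicts the assumption that $a \in \mathcal{A}(M)$. Hence $a \in \mathcal{A}(\langle S_i \rangle)$, which yields the inclusion \eqref{eq:distribution of atoms in finitely many submonoids}.

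There is essentially no obstacle here: the entire argument rests on the reducedness of Puiseux monoids (so that $\mathcal{A}(M) \subseteq S$) and the trivial observation that a nontrivial decomposition inside $\langle S_i \rangle$ is automatically a nontrivial decomposition inside $M$. If anything deserves a moment of attention, it is simply making sure one states that $a$ must belong to some $S_i$ before trying to locate it inside the corresponding submonoid.
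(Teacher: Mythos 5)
Your argument is correct and matches the paper's proof essentially step for step: both start from $\mathcal{A}(M) \subseteq S$ (so an atom lands in some $S_i$) and then observe that an atom of $M$ lying in $\langle S_i \rangle$ must remain an atom there. The only difference is cosmetic — the paper invokes the inclusion $\mathcal{A}(M) \cap \langle S_i \rangle \subseteq \mathcal{A}(\langle S_i \rangle)$ as a single step, while you spell out the underlying one-line contradiction.
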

	
	\begin{proof}
		If $\mathcal{A}(M)$ is empty, then \eqref{eq:distribution of atoms in finitely many submonoids} follows trivially. So assume $\mathcal{A}(M)$ is not empty, and take $a$ to be an atom of $M$. Then $a \in S$, and therefore there exists $i \in \{1,\dots,n\}$ such that $a \in S_i \subseteq \langle S_i \rangle$. Because $\mathcal{A}(M) \cap \langle S_i \rangle \subseteq \mathcal{A}(\langle S_i \rangle)$, one gets
		\[
			a \in \mathcal{A}(\langle S_i \rangle) \subseteq \bigcup_{i=1}^n \mathcal{A}(\langle S_i \rangle).
		\]
		Since $a$ was taken arbitrarily in $\mathcal{A}(M)$, the inclusion \eqref{eq:distribution of atoms in finitely many submonoids} holds, as desired.
	\end{proof}
	
	We are now in a position to prove our last main result.
	
	\begin{theorem} \label{theo: bounded P-RM are NS or non-atomic} 
		Let $M$ be a strongly bounded finite Puiseux monoid. Then $M$ is atomic if and only if $M$ is isomorphic to a numerical semigroup.
	\end{theorem}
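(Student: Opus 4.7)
The forward direction is immediate, since every numerical semigroup is atomic. For the converse, assume $M$ is an atomic strongly bounded Puiseux monoid that is finite over some $P = \{p_1, \dots, p_k\}$; the plan is to show $|\mathcal{A}(M)| < \infty$, which by Theorem~\ref{theo:finitely generated rational monoids} and Proposition~\ref{prop:finitely generated positive rational monoids} forces $M$ to be isomorphic to a numerical semigroup. Suppose toward a contradiction that $\mathcal{A}(M)$ is infinite and enumerate its elements as $r_n = a_n/b_n$ in reduced form. Since $M$ is atomic and strongly bounded, Proposition~\ref{prop:a generating set of an atomic SBPM contains a SB generating set} applied to the generating set $\mathcal{A}(M)$ forces $\mathcal{A}(M)$ itself to be strongly bounded, so there is $B \in \nn$ with $a_n \le B$ for every $n$; and by the choice of $P$, $\mathsf{Spec}(b_n) \subseteq \{p_1,\dots,p_k\}$.

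First, argue that $\{b_n\}$ must be unbounded: otherwise the pairs $(a_n,b_n)$ would take only finitely many values in $\{1,\dots,B\} \times \nn$, contradicting the pairwise distinctness of the atoms. Next, apply pigeonhole to the finite set $\{1,\dots,B\}$ to extract a subsequence along which $a_n$ equals a single value $a$; since $\{b_n\}$ is unbounded, this subsequence can be chosen so that its denominators are themselves unbounded. Relabel, and continue to write the remaining atoms as $r_n = a/b_n$ with $\gcd(a,b_n)=1$ and $b_n \to \infty$.

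Now apply Lemma~\ref{lem:common bound for simultaneous strictly increasing valuations} to $\{b_n\}$ to obtain a threshold $N \in \nn$. For each $n$, set $I_n = \{i \in \{1,\dots,k\} : {\pval}_i(b_n) > N\}$; unboundedness of $\{b_n\}$ with $\mathsf{Spec}(b_n) \subseteq P$ forces $I_n$ nonempty for infinitely many $n$, and a pigeonhole on the $2^k$ possible subsets yields a nonempty $I \subseteq \{1,\dots,k\}$ with $I_n = I$ along an infinite subsequence. For every $i \notin I$, the value ${\pval}_i(b_n)$ lies in the finite set $\{0,\dots,N\}$, so an iterated pigeonhole further extracts a subsequence on which ${\pval}_i(b_n) = c_i$ is constant for every $i \notin I$. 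A final application of Lemma~\ref{lem:common bound for simultaneous strictly increasing valuations}, available because ${\pval}_i(b_n) > N$ for every $i \in I$ along this subsequence, yields yet another subsequence on which ${\pval}_i(b_n)$ is strictly increasing for every $i \in I$. Relabel once more as $\{r_n = a/b_n\}$.

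Along this final subsequence ${\pval}_i(b_n) \le {\pval}_i(b_{n+1})$ for every $i \in \{1,\dots,k\}$, so $b_n \mid b_{n+1}$; and for any $i \in I$ (nonempty) the strict inequality forces $p_i$ to divide the positive integer $d_n := b_{n+1}/b_n$, yielding $d_n \ge 2$. Then
\[
r_n \,=\, \frac{a}{b_n} \,=\, d_n \cdot \frac{a}{b_{n+1}} \,=\, d_n\, r_{n+1}
\]
expresses $r_n$ as a sum of $d_n \ge 2$ copies of the nonzero element $r_{n+1} \in M$, contradicting the assumption that $r_n \in \mathcal{A}(M)$. The main technical obstacle is orchestrating the chain of pigeonhole and Lemma~\ref{lem:common bound for simultaneous strictly increasing valuations} extractions so that a single final subsequence simultaneously keeps $a_n$ constant, stabilizes ${\pval}_i(b_n)$ for each $i \notin I$, and strictly increases it for each $i \in I$; once this is set up, the one-line identity $r_n = d_n r_{n+1}$ closes the argument.
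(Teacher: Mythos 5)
Your overall strategy is sound and genuinely different from the paper's: rather than inducting on $|P|$ and partitioning the generating set via Lemmas~\ref{lem:simultaneously increasing subsequence implies antimatter}--\ref{lem:partitioning the set of generators}, you attack the atoms directly, reduce to a single numerator $a$, and extract a subsequence of denominators forming a divisibility chain, killing irreducibility in one line. The setup (Proposition~\ref{prop:a generating set of an atomic SBPM contains a SB generating set} to make $\mathcal{A}(M)$ strongly bounded, distinctness of atoms forcing $\{b_n\}$ unbounded, pigeonhole to a constant numerator) is all correct.

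The gap is in the subsequence extraction built around Lemma~\ref{lem:common bound for simultaneous strictly increasing valuations}. The threshold $N$ produced by that lemma is attached to the sequence it is applied to, and its conclusion yields a subsequence of that same sequence. You apply the lemma to $\{b_n\}$, pass to a further subsequence on which ${\pval}_i(b_n) = c_i$ for each $i \notin I$, and then invoke the lemma once more to make ${\pval}_i(b_n)$ strictly increasing for every $i \in I$ on a yet further subsequence. But the lemma, applied to the original $\{b_n\}$, only gives a subsequence of $\{b_n\}$, not of your already-extracted one; and if you reapply it to the extracted subsequence you obtain a new threshold $N'$ with no guaranteed relation to $N$, so the hypothesis ${\pval}_i(b_n) > N'$ need not hold. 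In fact the claimed sub-subsequence need not exist at all: with $P = \{2,3\}$ and the extracted subsequence $b_n = 2^n \cdot 3^{N+1}$, one has $I = \{1,2\}$ yet ${\pval}_2(b_n) = N+1$ is constant, so no sub-subsequence is strictly increasing in both coordinates.

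Fortunately, the contradiction you want only needs $b_n \mid b_{n+1}$ together with $b_n < b_{n+1}$, i.e.\ every ${\pval}_i$ non-decreasing and at least one strictly increasing, and this is reachable with no appeal to Lemma~\ref{lem:common bound for simultaneous strictly increasing valuations}. After fixing $a$, the $b_n$ are distinct positive integers supported on $P$. For $i = 1, \dots, k$ in turn, extract a subsequence on which ${\pval}_i(b_n)$ is either constant or strictly increasing (any infinite sequence of non-negative integers has such a subsequence, and both properties persist under further extraction). After $k$ passes each ${\pval}_i$ is constant or strictly increasing on the surviving subsequence; since the surviving $b_n$ are still pairwise distinct, at least one ${\pval}_i$ strictly increases. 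Then $b_n \mid b_{n+1}$, $d_n := b_{n+1}/b_n \ge 2$, and $r_n = d_n r_{n+1}$ contradicts $r_n \in \mathcal{A}(M)$ exactly as you wrote. With this repair your argument is correct and noticeably shorter than the paper's induction, bypassing Lemmas~\ref{lem:simultaneously increasing subsequence implies antimatter} and~\ref{lem:partitioning the set of generators} entirely.
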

	
	\begin{proof}
		Let $P$ be a set of primes such that $M$ is finite over $P$. First, we will prove that $M$ has only finitely many atoms. We proceed by induction on the cardinality of $P$. If $|P|=0$, then $M$ is isomorphic to a numerical semigroup and, therefore, $\mathcal{A}(M)$ is finite. Suppose $k$ is a positive integer such that the statement of the theorem is true when $|P| < k$. We shall prove that every strongly bounded Puiseux monoid $M$ over $P$ has finitely many atoms when $|P|=k$. Set $P = \{p_1, \dots, p_k\}$. In addition, let $\{a_n\}$ and $\{b_n\}$ be two sequences of natural numbers such that $\{a_n\}$ is bounded and $\gcd(a_n,b_n)=1$ for every $n \in \nn$. Let
		\[
			M = \langle S \rangle, \text{ where } S = \bigg\{\frac{a_n}{b_n} \ \bigg{|} \ n \in \nn \bigg\}.
		\]
		To show that $M$ has only finitely many atoms, we will distribute the generators $a_n/b_n$ of $M$ into finitely many submonoids of $M$ and then we will apply Lemma~\ref{lem:partitioning the set of generators}. Since the sequence $\{a_n\}$ is bounded, it has a maximum, namely $m$. Set
		\[
			S_{j,I} = \bigg\{\frac{a_n}{b_n} \ \bigg{|} \ n \in \nn, \ a_n = j, \text{ and }\text{Supp}_P(b_n) = I \bigg\}
		\]
		for each $j \in \{1, \dots,m\}$ and $I \subseteq \{1, \dots,k\}$. Let $\mathcal{J}$ be the set of pairs $(j,I)$ such that $S_{j,I}$ is not empty. Using Lemma \ref{lem:partitioning the set of generators}, we obtain
		\begin{equation}
			\mathcal{A}(M) \subseteq S \subseteq \bigcup_{(j,I) \in \mathcal{J}} \mathcal{A}(\langle S_{j,I} \rangle). \label{eq:distribution of atoms}
		\end{equation}
		For each $(j,I) \in \mathcal{J}$, let $M_{j,I}$ be the Puiseux monoid over $P$ generated by $S_{j,I}$. By the inclusion \eqref{eq:distribution of atoms}, we are done once we show that $M_{j,I}$ contains only finitely many atoms for every pair $(j,I) \in \mathcal{J}$.
		
		We fix an arbitrary pair $(j,I) \in \mathcal{J}$ and prove that $\mathcal{A}(M_{j,I})$ is finite. Since $M_{j,I}$ is a strongly bounded finite Puiseux monoid over $P$, if $I$ is strictly contained in $\{1, \dots, k\}$, then $\mathcal{A}(M_{j,I})$ is finite (induction hypothesis). So it just remains to check that $M_{j,I}$ contains only finitely many atoms when $I = \{1, \dots,k\}$. If $S_{j,I}$ is finite, then by Proposition \ref{prop:finitely generated positive rational monoids} the monoid $M_{j,I}$ is isomorphic to a numerical semigroup; in this case, $\mathcal{A}(M_{j,I})$ is finite. So we will assume $S_{j,I}$ is not finite. Let $\{s_n\}$ be a subsequence of the sequence $\{b_n\}$ such that $S_{j,I} = \{j/s_n \mid n \in \nn\}$. If $\{s_n\}$ contains a subsequence $\{s'_n\}$ such that ${\pval}_i(s'_n)$ is strictly increasing for each $i \in \{1, \dots,k\}$, then Lemma \ref{lem:simultaneously increasing subsequence implies antimatter} implies that $M_{j,I}$ is antimatter and, therefore, contains  no atoms (notice that $M_{j,I}$ is isomorphic to $\langle 1/s_n \mid n \in \nn \rangle$ via $x \mapsto j^{-1}x$). So we assume such a subsequence of $\{s_n\}$ does not exist. Using Lemma \ref{lem:common bound for simultaneous strictly increasing valuations}, we can find $N \in \nn$ so that if for $I' \subseteq \{1, \dots,k\}$ there is $n \in \nn$ satisfying ${\pval}_i(s_n) > N$ for each $i \in I'$, then there exists a subsequence $\{s'_n\}$ of $\{s_n\}$ such that $\{{\pval}_i(s'_n)\}$ is strictly increasing for each $i \in I'$ (note that now $I'$ must be a proper subset of $\{1, \dots,k\}$). Set $\mu = p_1^N \dots p_k^N$ and $M' = \mu M_{j,I}$. Thus, one has that $M_{j,I}$ is isomorphic to $M'$ via multiplication by $\mu$, meaning $x \mapsto \mu x$. So it suffices to check that $M'$ has finitely many atoms.
		
		To prove that $M'$ has only finitely many atoms, consider its proper generating set $S' = \{a'_n/b'_n \mid n \in \nn\}$, where $a'_n/b'_n$ results from reducing the fraction $\mu j/s_n$ to lowest terms. For every $n \in \nn$ and $I \subseteq \{1, \dots,k\}$, it follows that $p_i$ divides $b'_n$ for each $i \in I$ if and only if ${\pval}_i(s_n) > N$ for each $i \in I$, which implies the existence of a subsequence $\{s'_n\}$ of $\{s_n\}$ such that $\{{\pval}_i(s'_n)\}$ is strictly increasing for each $i \in I$ (by Lemma \ref{lem:common bound for simultaneous strictly increasing valuations}). Since $\{s_n\}$ contains no subsequence $\{s'_n\}$ such that $\{{\pval}_i(s'_n)\}$ is strictly increasing for each $i \in \{1, \dots,k\}$, for every $n \in \nn$ there is at least an index $i \in \{1, \dots, k\}$ such that ${\pval}_i(s_n) \le N$, i.e., $\text{Supp}_P(b'_n)$ is a proper subset of $\{1, \dots, k\}$ for every $n \in \nn$. Set
		\[
			S_I = \bigg\{\frac{a'_n}{b'_n} \ \bigg{|} \ n \in \nn \ \text{ and } \ \text{Supp}_P(b'_n) = I \bigg\}
		\]
		for each $I \subset \{1, \dots,k\}$. Let $\mathcal{I}$ be the collection of all subsets $I$ of $\{1,\dots, k\}$ such that $S_I$ is not empty. Notice that every element of $\mathcal{I}$ is a proper subset of $\{1,\dots,k\}$. By Lemma \ref{lem:partitioning the set of generators},
		\begin{equation}
			\mathcal{A}(M') \subseteq S' \subseteq \bigcup_{I \in \mathcal{I}} \mathcal{A}(\langle S_I \rangle). \label{eq:distribution of atoms 2}
		\end{equation}
		Since each set of indices $I \in \mathcal{I}$ is strictly contained in $P$, by the induction hypothesis we get that $\langle S_I \rangle$ contains only finitely many atoms for every $I \in \mathcal{I}$. Therefore $|\mathcal{A}(M')| < \infty$ follows from \eqref{eq:distribution of atoms 2}.
		
		At this point, we have proved that every strongly bounded Puiseux monoid over a finite set of primes $P$ has finitely many atoms. Suppose $M$ is atomic. By Theorem \ref{theo:finitely generated rational monoids}, $M$ has a minimal set of generators, which must be $\mathcal{A}(M)$. Hence $M$ is finitely generated and, by Proposition \ref{prop:finitely generated positive rational monoids}, it is isomorphic to a numerical semigroup. Conversely, suppose $M$ is isomorphic to a numerical semigroup. Since every numerical semigroup is atomic, $M$ must be atomic. This completes the proof.
	\end{proof}
	
	Example \ref{ex:atomic example with infinitely many atoms} can be used as evidence that Theorem \ref{theo: bounded P-RM are NS or non-atomic} does not hold if we do not require $P$ to be finite. Besides, the strongly boundedness of the Puiseux monoid $M$ over $P$ is not superfluous, as one can see in Proposition \ref{prop:PM atomic not strongly bounded}, which guarantees the existence of an atomic Puiseux monoid over a prime $p$ with infinitely many atoms that fails to be strongly bounded. \\
	
	\section{Acknowledgments}
	
	While working on this paper, the author was supported by the UC Berkeley Chancellor Fellowship. The author is grateful to Scott Chapman for valuable feedback and encouragement, to Marly Gotti for her dedicated final review, and to Chris O'Neill for helpful discussions on early drafts of this paper. Also, the author thanks an anonymous referee for suggesting recommendations that improved this paper. \\


\begin{thebibliography}{20}
		
		\bibitem{ACHP07} J.~Amos, S.~T.~Chapman, N.~Hine, and J.~Paixao: \emph{Sets of lengths do not characterize numerical monoids}, Integers {\bf 7} (2007) Paper A50, 8pp.
		
		\bibitem{BOP16} T.~Barron, C.~O'Neill, and R.~Pelayo: \emph{On the set of elasticities in numerical monoids}, to appear, Semigroup Forum. [arXiv:1409.3425]
		
		\bibitem{BDF97} V.~Barucci, D.~E.~Dobbs, and M.~Fontana: \emph{Maximality Properties in Numerical Semigroups and Applications to One-Dimensional Analytically Irreducible Local Domains}, vol.~125, Memoirs of the Amer. Math. Soc., 1997.
		
		\bibitem{aB42} A.~Brauer: \emph{On a problem of partitions}, Amer. J. Math. {\bf 64} (1942) 299--312.
		
		\bibitem{sC14} S.~T.~Chapman: \emph{A tale of two monoids: A friendly introduction to nonunique factorizations}, Math. Mag. {\bf 87} (2014) 163--173.
		
		\bibitem{CCMMP14} S.~T.~Chapman, M.~Corrales, A.~Miller, C.~Miller, and D.~Patel: \emph{The catenary and tame degrees on a numerical monoid are eventually periodic}, J. Aust. Math. Soc. {\bf 97} (2014) 289--300.
		
		\bibitem{CGP14} S.~T.~Chapman , F.~Gotti, and R.~Pelayo: \emph{On delta sets and their realizable subsets in Krull monoids with cyclic class groups}, Colloq. Math. {\bf 137} (2014) 137--146.
		
		\bibitem{CHK09} S.~T.~Chapman, R.~Hoyer, and N.~Kaplan: \emph{Delta sets of numerical monoids are eventually periodic}, Aequationes Math. {\bf 77} (2009) 273--279.
		
		\bibitem{CS10} S.~T.~Chapman and D.~Steinberg: \emph{On the elasticity of generalized arithmetical congruence monoids}, Results Math. {\bf 58} (2010) 221--231.
		
		\bibitem{CDM99} J.~Coykendall, D. E.~Dobbs, and B.~Mullins: \emph{On integral domains with no atoms}, Comm. Algebra {\bf 27} (1999) 5813--5831.
		
		\bibitem{dE95} D.~Eisenbud: \emph{Commutative Algebra With a View Toward Algebraic Geometry}, Grad. Texts in Math., vol.~150, Springer-Verlag, New York, 1995.
		
		\bibitem{GR99} P.~A.~Garc\'ia-S\'anchez and J.~C.~Rosales: \emph{Finitely Generated Commutative Monoids}, Nova Science Publishers Inc., New York, 1999.
		
		\bibitem{GR09} P.~A.~Garc\'ia-S\'anchez and J.~C.~Rosales: \emph{Numerical Semigroups}, Developments in Mathematics, vol.~20, Springer-Verlag, New York, 2009.
		
		\bibitem{GH06} A.~Geroldinger and F.~Halter-Koch: \emph{Non-Unique Factorizations: Algebraic, Combinatorial and Analytic Theory}, Pure and Applied Mathematics, vol.~278, Chapman \& Hall/CRC, Boca Raton, 2006.
		
		\bibitem{pG01} P.~A.~Grillet: \emph{Commutative Semigroups}, Advances in Mathematics, vol.~2, Kluwer Academic Publishers, Boston, 2001.
		
		\bibitem{mO12} M.~Omidali: \emph{The catenary and tame degree of numerical monoids generated by generalized arithmetic sequences}, Forum Math. {\bf 24} (2012) 627--640.
		
		\bibitem{vP1850} V.~A.~Puiseux: \emph{Recherches sur les fonctions alg\'ebriques}, Jour. de Math. {\bf 15} (1850) 365--480.
		
		\bibitem{wS05} W.~Schmid: \emph{Differences in sets of lengths of Krull monoids
		with finite class group}, J. Th\'eor. Nombres Bordeaux {\bf 17} (2005) 323--345.
		
	\end{thebibliography}
\end{document}